\documentclass[12pt]{article}

\usepackage{amsmath,amssymb,amsthm,graphicx,url}

\newcommand\cA{{\mathcal A}}

\newcommand\cC{{\mathcal C}}

\newcommand\cL{{\mathcal L}}

\newcommand\cP{{\mathcal P}}

 \newcommand\cU{{\mathcal U}}

\newcommand\uu{{\mathbf u}}

\newcommand\vv{{\mathbf v}}

\newcommand\lul{{\mathbf l}}

\theoremstyle{plain}

\newtheorem{theorem}{Theorem}[section]

\newtheorem{lemma}[theorem]{Lemma}

\newtheorem{corollary}[theorem]{Corollary}

\newtheorem{claim}[theorem]{Claim}

\theoremstyle{definition}

\newcommand\lref[1]{Lemma~\ref{lem:#1}}

\newcommand\tref[1]{Theorem~\ref{thm:#1}}

\newcommand\cref[1]{Corollary~\ref{cor:#1}}

\newcommand\sref[1]{Section~\ref{sec:#1}}

\newcommand{\PG}{\mathrm{PG}}

\newcommand{\GF}{\mathrm{GF}}

{}

\setlength{\headheight}{0mm}

\setlength{\oddsidemargin}{-0mm}

\setlength{\topmargin}{-15mm}

\setlength{\textwidth}{160mm}

\setlength{\textheight}{220mm}

\textheight=8in \textwidth=6.5in \topmargin=0.3in \oddsidemargin=0in

\begin{document}

\title{Search Problems in Vector Spaces}

\author{Tam\'as H\'eger\thanks{MTA--ELTE Geometric and Algebraic Combinatorics Research Group, H--1117 Budapest, P\'azm\'any P.\ s\'et\'any 1/C, Hungary, \textsc{hetamas@cs.elte.hu}. Research supported by OTKA Grant No.\ K~81310 and partially supported by ERC Grant No.\ 227701 DISCRETECONT.} 
\and Bal\'azs Patk\'os\thanks{Hungarian Academy of Sciences, Alfr\'ed R\'enyi Institute of Mathematics, P.O.B.\ 127, Budapest H--1364, Hungary, \textsc{patkos@renyi.hu}. Research supported by OTKA Grant PD-83586 and the J\'anos Bolyai Research Scholarship of the Hungarian Academy of Sciences}\and Marcella Tak\'ats\thanks{Department of Computer Science, E\"otv\"os Lor\'and University, H--1117 Budapest, P\'azm\'any P.\ s\'et\'any 1/C, Hungary, \textsc{takats@cs.elte.hu}. Research supported by OTKA Grant No.\ K~81310.}}

\maketitle

\begin{abstract}
We consider the following $q$-analog of the basic combinatorial search problem: let $q$ be a prime power and $\GF(q)$ the finite field of $q$ elements. Let $V$ denote an $n$-dimensional vector space over $\GF(q)$ and let $\mathbf{v}$ be an unknown 1-dimensional subspace of $V$. We will be interested in determining the minimum number of queries that is needed to find $\mathbf{v}$ provided all queries are subspaces of $V$ and the answer to a query $U$ is YES if $\mathbf{v} \leqslant U$ and NO if $\mathbf{v} \not\leqslant U$. This number will be denoted by $A(n,q)$ in the adaptive case (when for each queries answers are obtained immediately and later queries might depend on previous answers) and $M(n,q)$ in the non-adaptive case (when all queries must be made in advance).

In the case $n=3$ we prove $2q-1=A(3,q)<M(3,q)$ if $q$ is large enough. While for general values of $n$ and $q$ we establish the bounds
\[
n\log q \le A(n,q) \le (1+o(1))nq 
\]
and 
\[
(1-o(1))nq \le M(n,q) \le 2nq,
\]
provided $q$ tends to infinity. 
\end{abstract}

\textit{AMS subject classification:} 68P10, 05B25.

\textit{Keywords:} combinatorial search; q-analog; projective space; separating system.

\section{Introduction}
The starting point of combinatorial search theory is the following problem: given a set $X$ of $n$ elements out of which one $x$ is marked, what is the minimum number $s$ of queries of the form of subsets $A_1,A_2,\ldots ,A_s$ of $X$ such that after getting to know whether $x$ belongs to $A_i$ for all $1 \le i \le s$ we are able to determine $x$. Since decades, the number $s$ is known to be equal to $\lceil \log n \rceil$ no matter if  the $i$th query might depend on the answers to the previous ones (\textit{adaptive search}) or we have to ask our queries at once (\textit{non-adaptive search}). (Here and throughout the paper $\log$ denotes the logarithm of base 2.)

There are lots of variants of this problem. There can be multiple marked elements and our aim can be to determine at least one of them or all of them or a constant fraction of them. The number of marked elements can be known or unknown. There can be restrictions on the possible set $Q$ of queries; only small subsets can be asked or other restrictions may apply. Also, there are models in between the adaptive and the non-adaptive version: we might be allowed to ask our queries in $r$ \textit{rounds}, that is our queries of the $i+1$st round may depend on the answers to all queries in the first $i$ rounds and we would like to minimize the total number of queries. For these and further models we refer the reader to the monograph of Du and Hwang \cite{DH}.

In this paper we address the $q$-analogue of the basic problem. Let $q$ be a prime power and $\GF(q)$ the finite field of $q$ elements. Let $V$ denote an $n$-dimensional vector space over $\GF(q)$ and let $\vv$ be a marked 1-dimensional subspace of $V$ (throughout the paper 1-dimensional subspaces will be denoted by boldface lower case letters, vectors will be denoted by lower case letters with normal typesetting and upper case letters will denote subspaces of higher or unknown dimension). We will be interested in determining the minimum number of queries that is needed to find $\vv$ provided all queries are subspaces of $V$ and the answer to a query $U$ is YES if $\vv \leqslant U$ and NO if $\vv \not\leqslant U$. This number will be denoted by $A(n,q)$ in the adaptive case and $M(n,q)$ in the non-adaptive case. Note that a set $\cU$ of subspaces of $V$ can be used as query set to determine the marked 1-space in a non-adaptive search if and only if for every pair $\uu, \vv$ of 1-subspaces of $V$ there exists a subspace $U \in \cU$ with $\uu \leqslant U, \vv \not\leqslant U$ or $\uu \not\leqslant U, \vv \leqslant U$. Such systems of subspaces are called \textit{separating}.

Note that the $q$-analogue problem fits into the original subset settings. Indeed, let the set of $k$-dimensional subspaces of an $n$-dimensional vector space $V$ over $\GF(q)$ be denoted by ${V \brack k}$. Its cardinality $|{V \brack k}|$ is 
\[
{n \brack k}_q={n \brack k}=\frac{(q^n-1)(q^{n-1}-1)\ldots(q^{n-k+1}-1)}{(q^k-1)(q^{k-1}-1)\ldots(q-1)}.
\] 
Then if we let the underlying set $X$ be ${V \brack 1}$ and the set $Q$ of allowed queries be 
\[
\left\{F \subset {V \brack 1}: \exists U \le V \textnormal{ with}\ F=\left\{\uu \in {V \brack 1}: \uu \le U\right\}\right\},
\]
then we obtain the same problem.

Let us note that it is easy to show that $A(n,2)=M(n,2)= n$ for all $n \ge 2$. The reader is welcome to think about the one line proof that we will describe in \sref{general}. Thus we will mainly focus on the case when $q \ge 3$.

The subspaces of an $n$-dimensional vector space over $\GF(q)$ are the elements
of the Desarguesian projective geometry $\PG(n-1,q)$. In \sref{proj} we
consider the case when $n$ equals 3, 
that is the case of projective planes. After introducing some projective
geometry terminology, we determine $A(3,q)$ for all 
prime powers $q$. 

\begin{theorem}
\label{thm:adproj}
Consider a projective plane $\pi_q$ of order $q$. Let $A(\pi_q)$ denote minimum number
of queries in adaptive search that is needed to determine a point of $\pi_q$
provided the queries can be either points or lines of $\pi_q$. With this
notation we have $A(\pi_q)\leq 2q-1$; if $q$ is a prime power, then $A(\PG(2,q))=2q-1$, that is the equality $A(3,q)=2q-1$ holds.
\end{theorem}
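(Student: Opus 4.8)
The plan is to prove the upper bound $A(\pi_q)\le 2q-1$ by an explicit adaptive strategy valid in an arbitrary projective plane $\pi_q$ of order $q$, and the matching lower bound $A(3,q)\ge 2q-1$ by an adversary argument reducing to a covering estimate in $\PG(2,q)$. For the upper bound I would fix a point $P$, let $\ell_1,\dots,\ell_{q+1}$ be the lines through $P$, and query $\ell_1,\ell_2,\dots$ in turn, stopping at the first YES or after $\ell_q$. If $\ell_j$ is the first YES with $j\ge 2$, the NO answer to $\ell_1$ already shows $\vv\neq P$, so $\vv$ is one of the $q$ points of $\ell_j\setminus\{P\}$ and $q-1$ further point queries locate it, for a total of $j+(q-1)\le 2q-1$; if $\ell_1$ is the first YES then $q$ point queries on $\ell_1$ finish in $1+q\le 2q-1$; and if all of $\ell_1,\dots,\ell_q$ are NO then $\vv\in\ell_{q+1}\setminus\{P\}$ and $q-1$ point queries finish in $q+(q-1)=2q-1$.

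For the lower bound I would run the following adversary in $\PG(2,q)$ (the argument works in any $\pi_q$). Keep the set $S$ of points consistent with the answers so far (initially the whole point set); the searcher has located $\vv$ precisely when $|S|=1$. Answer NO to a query $U$ and replace $S$ by $S\setminus U$ whenever $S\not\subseteq U$; the only forced YES answers occur when $S\subseteq U$, and as long as $|S|\ge 2$ such a $U$ must be a line, which leaves $S$ unchanged and is thus a wasted query. Hence we may assume every query receives NO, so $S$ is the complement of the union of the queried points and lines, and — since that union can never cover the whole plane while the adversary plays — the searcher wins only once the union covers exactly $q^2+q$ points. Thus $A(3,q)\ge\tau(q)$, where $\tau(q)$ is the minimum size of a family of points and lines whose union has exactly $q^2+q$ points; the pencil strategy gives $\tau(q)\le 2q-1$, so it remains to show $\tau(q)\ge 2q-1$, i.e.\ that $q^2+q$ points of $\PG(2,q)$ cannot be covered by $2q-2$ points and lines.

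For this, suppose $l$ lines and $p$ points cover everything except one point $P_0$; then no covering line passes through $P_0$ and no covering point is $P_0$. Consider the pencil $m_1,\dots,m_{q+1}$ of lines through $P_0$: each covering line meets each $m_j$ in a single point $\neq P_0$, each covering point lies on exactly one $m_j$, and all $q$ points of $m_j$ other than $P_0$ must be covered; summing the local inequality over $j$ yields $p\ge q^2+q-N$, where $N$ is the number of points covered by the $l$ lines. So it suffices to prove $N\le l+q^2-q+1$. Writing $\mu_R$ for the number of covering lines through $R$, one has $\sum_R\binom{\mu_R}{2}=\binom{l}{2}$ (two lines meet in one point) and $\binom{\mu}{2}\le\tfrac{\mu_{\max}}{2}(\mu-1)^+$, hence $N=l(q+1)-\sum_R(\mu_R-1)^+\le l(q+1)-\tfrac{l(l-1)}{\mu_{\max}}$, which for $\mu_{\max}\le l$ gives $N\le lq+1$ — enough when $l\le q$. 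For $l\ge q+1$ I would use $\mu_{\max}\le q$ (no covering line through $P_0$) together with the extremal fact that $l$ lines of $\PG(2,q)$ avoiding a point cover at most $q^2+1+(l-q)$ points, again $\le l+q^2-q+1$. Combining this with $A(3,q)\ge\tau(q)$ and the upper bound yields $A(3,q)=2q-1$.

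The hard step I expect is exactly that extremal estimate for $l>q$: although $q$ lines of $\PG(2,q)$ cover only $q^2+1$ points, one has to rule out that many more lines, avoiding $P_0$ but arranged cleverly, cover substantially more than a pencil together with $l-q$ stray lines does, and then combine this with the number of point queries still forced to mop up the pencil through $P_0$. Carrying this out cleanly needs a case analysis — in particular on the largest number of covering lines through a single point — and standard facts about coverings of projective planes; everything else above is routine.
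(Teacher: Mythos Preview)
Your upper bound argument is exactly the paper's.

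For the lower bound you take a different route. Your adversary answers NO whenever possible, reducing the searcher's task to covering all but one point of $\PG(2,q)$ with as few points and lines as possible; you then need $\tau(q)\ge 2q-1$, which via $p\ge q^2+q-N$ comes down to showing that $l$ lines avoiding a fixed point cover at most $l+q^2-q+1$ points. Your convexity bound $N\le l(q+1)-l(l-1)/\mu_{\max}$ handles $l\le q$ (using only $\mu_{\max}\le l$), but for $q<l\le 2q-2$ you merely assert the needed bound as an ``extremal fact''. The refinement $\mu_{\max}\le q$ does not rescue the convexity argument: at $l=q+1$ it yields only $N\le q^2+q$, which is off by $q-2$ from the target $q^2+2$. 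And the target is not an elementary counting fact: already the endpoint $l=2q-2$ is the dual of the Jamison/Brouwer--Schrijver theorem that an affine blocking set in $\mathrm{AG}(2,q)$ has at least $2q-1$ points, so ``standard facts about coverings of projective planes'' understates what is required.

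The paper's adversary is different and cleaner: it answers NO until the queried lines together with \emph{one} further line would cover the plane, and then answers YES; the just-accepted line $\ell_k$ is essential in that cover, and the Blokhuis--Brouwer tangent bound (dually: an essential line $\ell$ of a line cover $L$ satisfies $|\ell\setminus\bigcup_{\ell'\in L\setminus\{\ell\}}\ell'|\ge 2q+1-|L|$) gives $k+|\ell_k\setminus\bigcup_{i<k}\ell_i|-1\ge 2q-1$ in one stroke; point queries are handled by replacing each with a suitable line. Your missing extremal fact can in fact be derived from this same Blokhuis--Brouwer corollary---extend the $l$ lines to a cover by adding the necessary pencil lines through $P_0$ and apply the corollary to each added line---so your plan is salvageable, but it rests on the same nontrivial input and is less direct than the paper's route.
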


In \sref{proj}, we also address the problem of determining $M(3,q)$. We obtain
upper and lower bounds but not the exact value except if $q\geq121$ is a square. 
The most important consequence of our results is the following theorem that
states that the situation is completely 
different from that in the subset case where adaptive and non-adaptive search require the same number of queries.

\begin{theorem}
\label{thm:adnonad}
For $q\ge 9$ the inequality $A(3,q) <M(3,q)$ holds.
\end{theorem}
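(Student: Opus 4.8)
The plan is to derive the theorem from \tref{adproj} together with the matching lower bound $M(3,q)\ge 2q$ (valid for $q\ge 9$): \tref{adproj} already gives $A(3,q)\le 2q-1$ for every prime power $q$, so once $M(3,q)\ge 2q$ is established we get $A(3,q)\le 2q-1<2q\le M(3,q)$. Thus the entire content is the lower bound on $M(3,q)$.

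So let $\cU=\cP\cup\cL$ be a separating system of subspaces, where $\cP$ is its set of point-queries (the whole plane and the $0$-subspace are useless and may be discarded) and $\cL$ its set of line-queries; put $a=|\cP|$, $b=|\cL|$, and for a point $P$ let $d(P)$ be the number of lines of $\cL$ through $P$. The starting point is a local remark: two distinct points $\uu,\vv$ lie on exactly the same members of $\cL$ only if they lie on no line of $\cL$, or they lie on a common line $\ell\in\cL$ and on no further line of $\cL$ (two points determine a unique line). Call the points of the first kind \emph{free} and those of the second kind \emph{private to $\ell$}; the classes so obtained are pairwise disjoint, and the separating property forces each of them to contain at most one point outside $\cP$. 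Hence, writing $f$ for the number of free points, $p_\ell$ for the number of points private to $\ell$, and $x^+=\max(x,0)$,
\[
a\ \ge\ (f-1)^+ \,+\, \sum_{\ell\in\cL}(p_\ell-1)^+ .
\]

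I would then dispose of the easy cases first: if $a\ge 2q$, or $b\ge 2q$, or $a\ge q$ and $b\ge q$, then $a+b\ge 2q$. It remains to treat $b\le q-1$ and, separately, $q\le b\le 2q-1$ with $a\le q-1$. For $b\le q-1$: separating the points of any line $m\notin\cL$ forces $|\cP\cap m|\ge q-b$, so in particular $\cP$ meets every line outside $\cL$ and hence misses at most $b$ lines; since an $a$-point set always misses at least $q^2+q+1-(aq+1)$ lines and every line carrying $\ge 2$ points of $\cP$ uses up a pair from the $\binom a2$ available, these incidence inequalities push $a$ up far enough that $a+b\ge 2q$ (the value $b=q-1$ is the tight one and requires combining both estimates). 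For $q\le b\le 2q-1$, $a\le q-1$, I return to the displayed bound: with $c$ the number of points on at least two lines of $\cL$ one has $f+\sum_\ell p_\ell=q^2+q+1-c$, so if $c$ is not too large the bound already gives $a+b\ge 2q$; and if $c$ is large, then $\cL$ overlaps so heavily that a large number of its lines have no private point at all, so the terms $(p_\ell-1)^+$ then lose essentially nothing and $a$ is again forced to be large. Making this quantitative means playing the identities $\sum_P\binom{d(P)}2=\binom b2$ and $\sum_P d(P)=b(q+1)$, together with $d(P)\le q+1$, against the inequality $c\ge q^2-q+1$ that "$a+b\le 2q-1$" would impose.

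The main obstacle is exactly this last regime. A dense configuration $\cL$ — for instance the lines dual to a conic together with a few extra lines, with $b$ close to $2q$ — can make the number of points lying on at most one line of $\cL$ strictly smaller than $2q+1$, so the crude estimate "$f+\sum_\ell p_\ell\ge 2q+1$" fails and one genuinely has to keep track of how many lines of $\cL$ have no private point in order to close the argument. Tightening the constants far enough for the argument to work everywhere is what forces the hypothesis $q\ge 9$; the finitely many smaller prime powers $q\in\{3,4,5,7,8\}$ would be handled by a separate \emph{ad hoc} argument or direct check.
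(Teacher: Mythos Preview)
Your plan diverges from the paper's argument, and the divergence matters. The paper does not attempt a self-contained incidence count at all: it first invokes \lref{line} to replace any separating system by one consisting of lines only, observes that a line-only separating system is exactly a semi-resolving set in the incidence graph of $\PG(2,q)$, and then quotes two external results. \tref{ht} (H\'eger--Tak\'ats) gives $M(3,q)\ge\min\{2q+q/4-3,\ \tau_2(\PG(2,q))-2\}$, and \tref{bb} (Ball--Blokhuis) gives $\tau_2(\PG(2,q))\ge 2(q+\sqrt q+1)$ for $q\ge 9$. Both branches of the minimum then exceed $2q-1=A(3,q)$, and the theorem follows.

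Your outline handles the easy ranges correctly (for instance, when $b\le q-1$ the bound $f\ge q(q+1-b)$ already forces $a\ge f-1$ large enough), but the ``main obstacle'' case $q\le b\le 2q-1$, $a\le q-1$ is left as a promise, and I do not believe it can be redeemed by the moment identities you list. Concretely, reduce to $a=0$ via \lref{line}; then separation is equivalent to $f\le 1$ and $p_\ell\le 1$ for every $\ell\in\cL$. Take $b=2q-1$. The constraints $\sum_P d(P)=b(q+1)$ and $\sum_P\binom{d(P)}{2}=\binom{b}{2}$, together with $f\le 1$ and $s_1:=\sum_\ell p_\ell\le b$, are perfectly consistent with, say, $f=1$, $s_1=q+1$, $c=q^2-1$: both the first-moment inequality $2c\le b(q+1)-s_1$ and the Cauchy--Schwarz/Jensen bound from the second moment hold with room to spare. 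So ``playing the identities against each other'' does not produce a contradiction.

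What actually blocks such a configuration is that a line set with $f\le1$ and all $p_\ell\le1$ is within a couple of lines of a genuine double cover, and ruling out a double cover of size close to $2q$ is exactly the content of the Ball--Blokhuis bound $\tau_2(\PG(2,q))\ge 2(q+\sqrt q+1)$, proved by polynomial (lacunary) methods rather than counting. The H\'eger--Tak\'ats theorem is precisely the device that converts ``almost a double cover'' into ``at least $\tau_2-2$ lines''. In short, the hard regime in your sketch is not a bookkeeping issue; it hides the same nontrivial input the paper cites, and without it the argument does not close.
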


In \sref{general}, we address the general problem of giving upper and lower bounds on $A(n,q)$ and $M(n,q)$. Our main results are the following theorems.

\begin{theorem}
\label{thm:adgen}
For any prime power $q\ge 2$ and positive integer $n$ the inequalities $\log {n \brack 1}_q \le A(n,q) \le (q-1)(n-1)+1$ hold.
\end{theorem}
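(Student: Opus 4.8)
The lower bound $\log {n \brack 1}_q \le A(n,q)$ is the standard information-theoretic bound: any adaptive strategy using $s$ queries produces one of at most $2^s$ possible YES/NO answer sequences, and distinct marked $1$-subspaces must yield distinct sequences, so $2^s \ge {n \brack 1}_q$.

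For the upper bound, the plan is to give an explicit adaptive strategy using at most $(q-1)(n-1)+1$ queries. I would proceed by locating the marked $1$-space $\vv$ coordinate by coordinate, reducing the dimension of the "suspect" subspace by one after a batch of queries. Concretely, fix a hyperplane $H$ of $V$ and a complementary $1$-subspace; the first idea is that among the $q+1$ "lines through a fixed point" — or more precisely, among a suitable pencil of hyperplanes — one can with $q-1$ queries either locate $\vv$ exactly or pin down a hyperplane containing it. The cleanest version: take the $q+1$ hyperplanes through a fixed $(n-2)$-subspace $W$; querying $q-1$ of them, if one answers YES we have restricted $\vv$ to that hyperplane (dimension drops by $1$), and if all $q-1$ answer NO then $\vv$ lies in one of the two remaining hyperplanes through $W$, and one further query distinguishes — but I must be careful that this does not overspend the budget. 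A more economical bookkeeping is to set up the recursion $A(n,q) \le A(n-1,q) + (q-1)$ with base case $A(1,q) = 0$ (a $1$-dimensional space has a unique $1$-subspace, so nothing to ask), which telescopes to exactly $(q-1)(n-1)$; the extra $+1$ absorbs the edge effect in the reduction step.

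The key recursive step is: given that $\vv$ is known to lie in some $k$-dimensional subspace $U$ (initially $U=V$, $k=n$), I want $q-1$ queries that either identify $\vv$ or produce a $(k-1)$-dimensional subspace known to contain $\vv$. Fix a hyperplane $U'$ of $U$ (so $\dim U' = k-1$) and a point $\mathbf{p} \le U$, $\mathbf{p}\not\le U'$; the $q+1$ hyperplanes of $U$ containing the fixed $(k-2)$-subspace $U' \cap H$ for a generic $H$ partition the points of $U$ outside that $(k-2)$-space. I would query $q-1$ of these hyperplanes: if some query returns YES, recurse inside that hyperplane; if all $q-1$ return NO, then $\vv$ is confined to the union of two hyperplanes through the common $(k-2)$-space, hence either $\vv \le U' \cap H$ (recurse there, dimension $k-2$, which is even better) or $\vv$ lies on one of two specific lines — and at this point only $O(1)$ extra queries finish the job, which is where the single additive $+1$ over the whole strategy comes from. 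I would present this carefully so the worst-case total is exactly $(q-1)(n-1)+1$.

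The main obstacle is the accounting at the bottom of the recursion: naively each level costs $q-1$ and there are $n-1$ productive levels, giving $(q-1)(n-1)$, but the "all NO" branch leaves a residual configuration (two hyperplanes through a codimension-$2$ space, or equivalently a few lines) that needs a constant number of extra queries, and one must verify this constant is absorbed by a single global $+1$ rather than accumulating per level. The trick is that the "all NO" outcome can only happen once along any root-to-leaf path before $\vv$ is fully determined, or else the dimension drops by $2$ rather than $1$ and we gain back the slack; structuring the argument so that this is transparent — perhaps by treating the final $2$- or $3$-dimensional case as an explicit base case and checking $A(2,q) \le q$ and $A(3,q) \le 2q-1$ directly (the latter is Theorem \ref{thm:adproj}) — is the cleanest route.
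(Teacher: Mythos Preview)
Your lower bound is fine and matches the paper's.

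Your upper bound has the right architecture (pencil of hyperplanes through a fixed $(n-2)$-space, recurse) but the bookkeeping is broken. Two concrete errors:

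First, your ``all NO'' analysis is wrong. If you query $q-1$ of the $q+1$ hyperplanes through a $(k-2)$-space $W$ and all answers are NO, then $\vv\not\le W$ (since $W$ sits in every queried hyperplane), so the branch ``$\vv\le U'\cap H$, recurse in dimension $k-2$'' cannot occur. What remains is that $\vv$ lies in one of the two unqueried hyperplanes, minus $W$; this is $2q^{k-2}$ one-spaces, not ``two specific lines''. So after $q-1$ queries you have \emph{not} reduced to a single $(k-1)$-space, and the recursion $A(k,q)\le A(k-1,q)+(q-1)$ is not established.

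Second, the claim that ``all NO can only happen once along any root-to-leaf path'' or that it forces a dimension drop of $2$ is unsupported; nothing in your setup prevents ``all NO'' from recurring, and when it does you are stuck with a non-subspace suspect set each time.

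The paper's fix is simple and you were close to it: at each level query $q$ (not $q-1$) of the $q+1$ hyperplanes through the chosen $(n-2)$-space $U$. Then the answers \emph{always} pin $\vv$ either to $U$ itself (all YES) or to a single hyperplane $U_i$ with the extra information $\vv\not\le U$. In the second case, when you recurse inside $U_i$ you choose the next codimension-two subspace to lie inside $U$, so $U$ is one of the hyperplanes in the new pencil and its answer (NO) is already known --- that is where the one query is saved. The first level costs $q$, each subsequent level costs $q-1$, and the base $A(2,q)\le q$ gives exactly $(q-1)(n-1)+1$.
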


\begin{theorem}
\label{thm:nonadgen}
There exists an absolute constant $C>0$ such that for any positive integer $n$ and prime power $q$ the inequalities $\frac{1}{C}qn \le M(n,q) \le 2qn$ hold. Moreover, if $q$ tends to infinity, then $(1-o(1))qn\le M(n,q)$ holds.
\end{theorem}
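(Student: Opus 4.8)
Throughout write $N={n \brack 1}_q$ for the number of points of $\PG(n-1,q)$, and recall that a family $\cU$ of subspaces is a valid non-adaptive query set exactly when it is separating, i.e.\ the map $\vv\mapsto\{U\in\cU:\vv\leqslant U\}$ is injective on these $N$ points; I will call $\{U\in\cU:\vv\leqslant U\}$ the \emph{signature} of $\vv$. The two inequalities will be proved, respectively, by a construction and by a counting argument.

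\emph{Upper bound $M(n,q)\le 2qn$.} It suffices to build a separating family of that size, and it is enough to use hyperplanes. Passing to the dual space, a set of hyperplanes $\{u_i^{\perp}\}$ separates all points if and only if the point set $S=\{u_i\}$ separates all hyperplanes of $\PG(n-1,q)$, for which it suffices that \emph{every} hyperplane section of $S$ spans the hyperplane (if $S\cap H$ did not span $H$ it would lie in a codimension‑$2$ flat $F$, and $H$ would then share its signature with another hyperplane through $F$). So the target becomes: find $\le 2qn$ points of $\PG(n-1,q)$ every hyperplane section of which spans. I would look for a \emph{global} construction here; the warning is that a naive dimension‑by‑dimension recursion — cone a separating family of a hyperplane $H$ over a new point $R$, then add queries to locate $\vv$ on the pencil line through $R$ — needs about $(q-1)(n-1)$ queries in the ``locate on the pencil'' step of every level (via $W\mapsto W\cap H$ such queries correspond to a point set in the dual meeting all but one coset of every hyperplane, which forces $\sim(n-1)q$ points), and so only yields $O(qn^{2})$. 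Achieving linear dependence on $n$ requires a single ``new coordinate'' worth of queries to serve all $n$ affine charts at once, e.g.\ via suitably generic lines or an algebraic choice of a bounded number of reference functionals; the handful of small fields are checked directly, using $M(n,2)=n$.

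\emph{Lower bound.} Let $\cU$ be separating with $|\cU|=m$. Double counting incident pairs $(\vv,U)$ gives
\[
\sum_{\vv}\bigl|\{U\in\cU:\vv\leqslant U\}\bigr|=\sum_{U\in\cU}\bigl|\{\vv:\vv\leqslant U\}\bigr|\le m\cdot{n-1 \brack 1}_q<\frac{mN}{q},
\]
because a proper (hence useful) query contains at most ${n-1 \brack 1}_q$ of the $N$ points, a fraction smaller than $1/q$. Hence the signatures have average size below $m/q$, so at least $N/2$ of the $N$ distinct signatures have size below $2m/q$; as subsets of an $m$‑element set they number at most $\sum_{j<2m/q}\binom mj\le 2^{mH(2/q)}$, where $H$ is the binary entropy, and since $H(2/q)=O(q^{-1}\log q)$ this forces $m\ge c\,q\log N/\log q\ge\tfrac1C\,qn$ for an absolute constant $C$ (the few small $q$ handled separately). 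For the sharpening one discards the wasteful single‑threshold estimate and instead maximizes $\sum_j n_j$ over sequences $(n_j)$ with $n_j\le\binom mj$ and $\sum_j j\,n_j\le(1+o(1))\,mN/q$ (the $o(1)$ coming from ${n-1 \brack 1}_q\big/(N/q)\to1$); since $\ln\binom mk=(1+o(1))\,k\ln q$ whenever $k\sim m/q$, the maximum is attained by concentrating the signatures at size $\approx m/q$, and demanding $\sum_j n_j\ge N$ then yields $m\ge(1-o(1))\,q\log_qN=(1-o(1))(n-1)q$, which gives the asserted asymptotic lower bound.

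\emph{Main obstacle.} On the construction side it is finding a separating family whose size is \emph{linear} in $n$ — ruling out the natural recursions that leak a factor $q$ per level — and certifying the needed general‑position or algebraic condition over every finite field (the small ones being the brittle cases). On the counting side it is extracting the constant: one must argue that the incidence bound $mN/q$ is essentially tight and carry out the full optimization over signature sizes, which is precisely the step that upgrades the crude $\Omega(qn)$ to $(1-o(1))(n-1)q$.
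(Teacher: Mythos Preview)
Your lower bound is essentially correct and amounts to a self-contained proof of (a special case of) Katona's theorem on separating systems with bounded set sizes, which is exactly what the paper invokes as a black box: with $M={n \brack 1}_q$ and $m={n-1 \brack 1}_q$, Katona's inequality $|\cA|\ge \dfrac{M}{m}\cdot\dfrac{\log M}{\log(eM/m)}$ gives both the $\Omega(qn)$ bound and the $(1-o(1))(n-1)q$ asymptotics immediately. Your Markov/entropy computation reproduces the crude bound, and the optimization you sketch for the sharp constant is precisely the content of Katona's proof; so this half is fine, just argued from first principles rather than by citation.

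The upper bound, however, is not a proof. You correctly dualize to the task of finding at most $2qn$ points in $\PG(n-1,q)$ whose hyperplane sections are pairwise distinct (or, more strongly, all spanning), and you correctly diagnose why the naive level-by-level recursion only gives $O(qn^{2})$. But then you stop: ``I would look for a global construction here \ldots\ via suitably generic lines or an algebraic choice'' is a plan, not a construction, and nothing is verified. The paper closes exactly this gap with a short probabilistic argument you do not mention: choose $2n$ independent uniformly random $(n-2)$-subspaces $X_1,\ldots,X_{2n}$ and, for each $X_i$, take $q$ of the $q+1$ hyperplanes through it. A direct count (Claim~\ref{clm:count}) shows that for any fixed pair $\uu,\vv$ the pencil through a single random $X_i$ fails to separate them with probability at most $1/q$; by independence the failure probability over all $2n$ pencils is at most $q^{-2n}$, and the union bound over $\binom{{n \brack 1}_q}{2}<q^{2n}/2$ pairs gives expected number of unseparated pairs below $1/2$, so some choice works. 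In the dual picture this is literally ``take $2n$ random lines'' --- so your instinct about generic lines was right, but the missing ingredient is the first-moment computation showing that \emph{random} lines already do the job, with no algebraic genericity hypothesis and no separate treatment of small fields.
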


We finish the Introduction by recalling the standard method to prove upper and lower bounds for adaptive search. In both cases we assume the existence of an Adversary. When showing a lower bound $b$ for the number of queries needed to determine the marked elements, we have to come up with a strategy that ensures that no matter what sequence of $b-1$ queries the Adversary asks we are able to answer these queries such that there exist at least two elements that match the answers given. In this way we make sure that $b-1$ queries are insufficient. When proving an upper bound our and the Adversary's roles change and this time our task is to provide a strategy using at most $b$ queries (depending on the answers of the Adversary) such that there exists exactly one element that matches the answers no matter what these answers are.

\section{Projective planes, the case $n=3$}
\label{sec:proj}

In this section we prove \tref{adproj} and \tref{adnonad}. Before describing the proofs let us introduce some terminology. For an overview on projective geometries over finite fields we refer to \cite{Hirsch}. Let $\pi$ be a projective plane of order $q$ with point set $\cP$ and line set $\cL$. We say that a point set $B$ is a \textit{blocking set} in $\pi$ if $|B \cap \ell| \geq 1$ for any line $\ell \in \cL$. A point $P$ of a blocking set $B$ is said to be \textit{essential} if $B \setminus \{P\}$ is not a blocking set. A set $\cC$ of lines \textit{covers} $\pi$ if $\cup_{\ell \in \cC}\ell=\cP$. A line $\ell$ of a cover $\cC$ is \textit{essential} if $\cC \setminus\{\ell\}$ is not a cover. A line $\ell$ is said to be a \textit{tangent} to a set $S \subseteq \cP$ if $|\ell \cap S|=1$ holds. Our main tool in proving \tref{adproj} is the following result.

\begin{theorem}[Blokhuis, Brouwer \cite{BlBr}]
\label{thm:szonyi} Let $S$ be a blocking set in $\PG(2,q)$. Then there are at least $2q + 1 - |S|$ distinct tangents to $S$ through any essential point of $S$.
\end{theorem}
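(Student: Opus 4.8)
The plan is to use essentiality to pin down a tangent through $P$, pass to an affine plane, and reduce the statement to the classical lower bound on affine blocking sets.

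First I would produce a tangent. Since $P$ is essential, $S\setminus\{P\}$ is not a blocking set, so some line $\ell$ avoids $S\setminus\{P\}$; as $S$ itself blocks every line, $\ell$ meets $S$, hence $\ell\cap S=\{P\}$, i.e.\ $\ell$ is a tangent to $S$ at $P$. Now delete $\ell$, so that $\PG(2,q)\setminus\ell$ becomes an affine plane in which $P$ is a single parallel class of lines; call that class ``vertical''. Put $T:=S\setminus\{P\}$, an affine point set with $|T|=|S|-1$. Reading the blocking condition for $S$ line by line shows it is equivalent to: $T$ meets every affine line whose direction is not $P$ (lines of direction $P$ are already blocked by $P\in S$). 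The $q+1$ lines of $\PG(2,q)$ through $P$ are $\ell$ together with the $q$ vertical affine lines, and a vertical line is a tangent to $S$ precisely when it is disjoint from $T$. Hence, writing $d$ for the number of vertical lines meeting $T$ (equivalently, the number of secants to $S$ through $P$), the number of tangents to $S$ through $P$ is exactly $1+(q-d)$, so the assertion of the theorem is equivalent to the inequality $|T|\ge q+d-1$.

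For this inequality I would pad $T$ cheaply into a full affine blocking set: to each of the $q-d$ vertical lines missing $T$ add one point, all chosen on a single fixed horizontal line, producing $T'\supseteq T$ with $|T'|=|T|+(q-d)$. Then $T'$ meets every vertical line by construction and every non-vertical line because $T\subseteq T'$, so $T'$ is a blocking set of $\mathrm{AG}(2,q)$. By the theorem of Jamison (and, independently, Brouwer--Schrijver), every blocking set of $\mathrm{AG}(2,q)$ has at least $2q-1$ points; therefore $|T|+(q-d)=|T'|\ge 2q-1$, i.e.\ $|T|\ge q+d-1$, as required.

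The only non-elementary ingredient is the affine blocking-set bound, whose standard proof runs through the polynomial method over $\GF(q)$; granting it, everything else is bookkeeping. I expect the one genuinely thought-requiring point to be the padding reduction — noticing that a single extra point per empty vertical line already forces $T'$ to be an affine blocking set — together with the (routine but easy-to-botch) dictionary between ``$S$ blocks $\PG(2,q)$'' and ``$T$ meets every non-vertical affine line''. Finally I would note that the bound is sharp for every $d$: for instance $S=(\{0\}\times\GF(q))\cup(\GF(q)\times\{0\})\cup\{P\}$ (with $P$ the vertical direction) is a blocking set of size $2q$ with $P$ essential and exactly one tangent ($\ell$) through $P$, meeting $2q+1-|S|=1$ with equality, and analogous configurations realize equality for the smaller values of $d$.
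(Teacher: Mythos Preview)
The paper does not prove this theorem; it is quoted from \cite{BlBr} (with a remark that it is equivalent to Sz\H{o}nyi's unique reducibility theorem) and used as a black box. Your argument is correct and is in fact essentially the original Blokhuis--Brouwer proof: pass to the affine plane obtained by deleting a tangent through the essential point, observe that the remaining set blocks all non-vertical lines, pad the empty verticals by one point each, and invoke the Jamison / Brouwer--Schrijver lower bound $2q-1$ for affine blocking sets. One cosmetic remark: the stipulation that the padding points lie on a single horizontal line is unnecessary (any choice of one point per empty vertical works, since $T$ already meets every non-vertical line), but it does no harm.
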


This result is actually the same as a unique reducibility theorem of Sz\H{o}nyi \cite{Sz}; for more details, we refer to \cite{Harrach,BHSz}.
To obtain \cref{szline} from \tref{szonyi} observe that its statement is the dual of \tref{szonyi}.

\begin{corollary}
\label{cor:szline} Let $L$ be a set of covering lines in $\PG(2,q)$ and let $\ell$ be an essential line of $L$. Then the inequality  $|\ell\setminus \cup_{\ell' \neq \ell, \ell' \in L}\ell'| \ge 2q+1-|L|$ holds.
\end{corollary}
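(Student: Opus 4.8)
The plan is to deduce the statement from \tref{szonyi} by passing to the dual plane. Recall that $\PG(2,q)$ is self-dual, so fix a duality $\delta$ interchanging points and lines and reversing incidence. First I would translate the hypotheses. If $L$ is a covering set of lines, set $S:=\{\delta(\ell'):\ell'\in L\}$, a set of points of the dual plane. Every line of the dual plane has the form $\delta(R)$ for a point $R$ of the original plane, and since $L$ covers, $R$ lies on some $\ell'\in L$; hence $\delta(\ell')\in S$ is incident with $\delta(R)$. Thus $S$ is a blocking set. Likewise, saying that $\ell$ is an essential line of $L$ means there is a point $R_0\in\ell$ lying on no other line of $L$; dually, $\delta(R_0)$ is a line meeting $S$ only in the point $\delta(\ell)$, so $\delta(\ell)$ is an essential point of $S$.

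Next I would identify the quantity to be bounded with a tangent count. A point $R$ of the original plane lies in $\ell\setminus\bigcup_{\ell'\in L,\ \ell'\neq\ell}\ell'$ precisely when $R$ is on $\ell$ but on no other line of $L$. Dualizing, this says $\delta(R)$ is a line through $\delta(\ell)$ containing no point of $S$ other than $\delta(\ell)$, i.e.\ a tangent to $S$ through the essential point $\delta(\ell)$. The correspondence $R\mapsto\delta(R)$ is a bijection between the two sets, so $|\ell\setminus\bigcup_{\ell'\in L,\ \ell'\neq\ell}\ell'|$ equals the number of tangents to $S$ through $\delta(\ell)$.

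Finally, applying \tref{szonyi} to the blocking set $S$ and its essential point $\delta(\ell)$ yields at least $2q+1-|S|$ such tangents; since $|S|=|L|$, this is exactly the claimed bound $|\ell\setminus\bigcup_{\ell'\neq\ell,\ \ell'\in L}\ell'|\ge 2q+1-|L|$.

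There is no real obstacle here; the only point requiring care is keeping the incidence dictionary straight — in particular, verifying that ``covers $\pi$'' dualizes exactly to ``blocking set'' (and not to something weaker), and that essentiality translates correctly in both directions. One should also record that the self-duality of $\PG(2,q)$ is what licenses applying \tref{szonyi}, which is stated for $\PG(2,q)$ itself, to the dual plane.
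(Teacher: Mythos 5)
Your proposal is correct and follows exactly the paper's route: the paper obtains the corollary by observing that it is the dual statement of \tref{szonyi}, which is precisely the duality dictionary you spell out (covers $\leftrightarrow$ blocking sets, essential line $\leftrightarrow$ essential point, uncovered points of $\ell$ $\leftrightarrow$ tangents through the essential point). Your write-up simply makes the translation explicit; no further comment is needed.
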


Now we recall and prove Theorem \ref{thm:adproj}.
\setcounter{section}{1}
\setcounter{theorem}{0}

\begin{theorem}
Consider a projective plane $\pi_q$ of order $q$. Let $A(\pi_q)$ denote minimum number
of queries in adaptive search that is needed to determine a point of $\pi_q$
provided the queries can be either points or lines of $\pi_q$. With this
notation we have $A(\pi_q)\leq 2q-1$; if $q$ is a prime power, then $A(\PG(2,q))=2q-1$, that is the equality $A(3,q)=2q-1$ holds.
\end{theorem}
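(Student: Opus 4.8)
The plan is to establish the two directions separately: the upper bound $A(\pi_q)\le 2q-1$ for any projective plane of order $q$, and the matching lower bound $A(\PG(2,q))\ge 2q-1$ using \tref{szonyi} (equivalently \cref{szline}).

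For the upper bound, I would exhibit an explicit adaptive strategy using at most $2q-1$ queries. First ask the $q+1$ lines through a fixed point $P$ (or rather, ask them one at a time). If one of them, say $\ell$, receives answer YES, then the marked point $\vv$ lies on $\ell$; there are $q+1$ points on $\ell$, and we can then query individual points of $\ell$ one by one — $q$ more queries suffice to isolate one of $q+1$ candidates (the last point needs no query). A naive count gives $q+1+q=2q+1$, so to reach $2q-1$ I would be slightly more careful: we do not need to ask all $q+1$ lines through $P$ (once $q$ of them return NO the last is forced), and similarly we do not need the last point-query on $\ell$; moreover if every line through $P$ returns NO, then $\vv=P$, which is identified after $q$ line-queries. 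Choosing the bookkeeping carefully — e.g. ask $q$ lines through $P$, then in the YES case ask $q-1$ points on the identified line — yields exactly $2q-1$. This part is routine once the counting is pinned down.

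The lower bound is the substantive part, and it is where I expect the main obstacle. Here the Adversary must answer any sequence of $2q-2$ queries so that at least two points remain consistent. A query that is a point $Q$ should be answered NO (unless forced otherwise), and a query that is a line $\ell$ should be answered according to whether the "surviving" candidate set meets $\ell$; the Adversary maintains the invariant that the set of still-possible marked points is "large" in a suitable sense. The natural potential to track is: after the NO-answers to lines $\ell_1,\dots,\ell_j$ and NO-answers to points $Q_1,\dots,Q_k$, the candidate set is $\cP\setminus(\ell_1\cup\dots\cup\ell_j\cup\{Q_1,\dots,Q_k\})$; a YES-answer to a line $\ell$ collapses the candidates into $\ell$ minus previously excluded points, after which further queries must be handled on a single line (an easy $q$-point subproblem). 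The crux is to show the Adversary can always keep two candidates alive through $2q-2$ queries. This is exactly what \cref{szline} is designed for: if the lines answered YES (there is at most one such, say $\ell$) together with the lines answered NO were to form a covering set $L$ with $\ell$ essential, then $\ell$ would retain at least $2q+1-|L|$ points not covered by the other lines; and the Adversary must also fend off the point-queries. I would argue that if the Adversary ever reaches a position with only one surviving candidate after $t$ queries, then the $t$ queried subspaces (the line answered YES, the lines answered NO, and the points answered NO) "determine" that point, and feed this into \cref{szline} to force $t\ge 2q-1$; a small case distinction on whether any line was answered YES handles the two regimes. The delicate bookkeeping — ensuring the covering/essential hypotheses of \cref{szline} genuinely hold in the position the Adversary is driven to, and correctly accounting for point-queries versus line-queries — is the heart of the argument and the step I would allot the most care to.

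Finally, combining the two bounds gives $A(\PG(2,q))=2q-1$ when $q$ is a prime power (the lower bound uses \tref{szonyi}, which is stated for $\PG(2,q)$), while the upper bound $A(\pi_q)\le 2q-1$ holds for every projective plane of order $q$.
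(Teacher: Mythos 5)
Your overall plan coincides with the paper's: an explicit adaptive algorithm through a fixed point for the upper bound, and an Adversary argument based on \cref{szline} (the dual of \tref{szonyi}) for the lower bound. However, the lower bound -- which you yourself identify as the substantive part -- is left as a plan rather than a proof, and the missing ingredient is precisely the paper's key idea: a concrete rule telling the answerer \emph{when} to give the first YES. The paper's rule is to answer YES to a line query $\ell_k$ exactly when there exists a single line $\ell$ such that $\ell_1,\ldots,\ell_k,\ell$ cover the plane. This rule does two jobs at once: if no YES is ever triggered, the uncovered points are not contained in any one line, so at least three candidates survive and the point cannot have been determined; and when the YES is triggered at step $k$, the line $\ell_k$ is automatically \emph{essential} in the cover $\{\ell_1,\ldots,\ell_k,\ell\}$ (otherwise YES would have come earlier), so \cref{szline} with $|L|=k+1$ guarantees at least $2q+1-(k+1)=2q-k$ candidates on $\ell_k$ avoided by all earlier lines, forcing at least $k+(2q-k)-1=2q-1$ queries in total. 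Your sketch hopes that ``the covering/essential hypotheses genuinely hold in the position the Adversary is driven to,'' but without such a rule there is no reason they should, and no way to relate $|L|$ to the number of queries already asked. Likewise, point queries are not mere bookkeeping: the paper needs a separate argument -- replace a queried point $P_k$ by a line through it whose addition still admits no one-line completion to a cover and answer NO with that extra information; and in the remaining case, where every line through $P_k$ admits a one-line completion, show the completing line $\ell^*$ can be taken to miss $P_k$ and to be essential, so the answerer may even concede that the unknown point lies on $\ell^*$. None of this is supplied in your proposal, so the lower bound is not established.

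The upper bound is essentially the paper's algorithm, but your final bookkeeping contains a slip: if all $q$ queried lines through $P$ answer NO, the marked point is \emph{not} $P$ (every line through $P$ contains $P$, so $P$ was excluded by the first NO); rather it is one of the $q$ points of the unqueried line $\ell_{q+1}$ other than $P$, and $q-1$ further point queries are needed, giving $2q-1$. Also, if the YES comes on the very first line, all $q+1$ of its points are still candidates, so $q$ (not $q-1$) further queries are needed there, for a total of $q+1\le 2q-1$. The bound still closes under the correct case analysis, but as written your accounting of these two cases is wrong.
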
\setcounter{section}{2}\setcounter{theorem}{5}

\begin{proof}
To obtain the upper bound, let us consider the following simple algorithm. Let
$x$ be an arbitrary point of the plane and let $\ell_1,\ell_2, \ldots,
\ell_{q+1}$ be the lines containing $x$. Let us ask $\ell_1,\ldots ,\ell_q$ one
after the other. Once the Adversary answers YES, then we have to find the
unknown point on that particular line, this takes at most $q$ further
queries. Moreover, if the YES answer comes to a query $\ell_i$ with $i>1$,
then we only need at most $q-1$ queries as we already know that $x$ is not the
unknown point. Thus if a YES answer comes to the $i$th query, $1\leq i\leq q$, 
we are done using $1+q$ or $i+q-1\le 2q-1$ queries according to whether
$i=1$ or $i>1$. If all answers are NO, then we obtain that the unknown point 
is in $\ell_{q+1}\setminus \{x\}$ and thus we need at most $q+q-1=2q-1$ queries.

To obtain the lower bound let us assume first that the Adversary only asks
lines as queries. Note that our only choice is about when to say YES for the
first time. Indeed, if the queries are $\ell_1,\ell_2,\ldots ,\ell_k$ and $\ell_k$
is the first query which we answer with YES, then the best we can do from then
on is to say NO as many times as possible. As the only possibilities for the
unknown point are the points in $\ell_k \setminus \cup_{i=1}^{k-1}\ell_i$, 
therefore the maximum number of queries we can reach is $k+|\ell_k \setminus \cup_{i=1}^{k-1}\ell_i|-1$.

Our strategy is simple: let the $k$th query $\ell_k$ be the first one we
answer with YES if there exists a line $\ell$ such that
$\ell_1,\ell_2,\ldots ,\ell_k, \ell$ form a covering set of lines. Observe that if
an Adversary is able to identify the unknown point, then he must have received
a YES answer from us. Indeed, if not, then by our strategy, there would be more than
two points that are not contained in any of the lines and thus
undistinguishable. Let the $k$th be the first query to which we answered
YES. Then there exists a line $\ell$ such that $\ell_1,\ell_2,\ldots ,\ell_k,
\ell$ cover the projective plane. We claim that $\ell_k$ is essential. Indeed,
if not then we should have answered YES earlier. Therefore, \cref{szline}
applies with $\ell_k$ being an essential line of the covering set
$\{\ell_1,\ell_2,\ldots ,\ell_k, \ell\}$. Thus, by our observation in the previous
paragraph, the minimum number of queries needed is 
\[
k+|\ell_k \setminus \cup_{i=1}^{k-1}\ell_i|-1 \ge k+|\ell_k \setminus (\cup_{i=1}^{k-1}\ell_i \cup \ell)|-1 \ge k+2q+1-(k+1)-1=2q-1.
\]

Let us now consider the general case where the Adversary is allowed to ask
queries that are points. We will always try to replace a point query by a
line. If the $k$th query is a point $P_k$ and there is a line $\ell_k$
containing $P_k$ such that all previously queried lines and the lines that
replaced queried points cannot be extended by a single line to a cover, then
we answer NO and provide the additional information to the Adversary that the
unknown point does not lie in $\ell_k$. Following this strategy when the
Adversary asks a line and with one additional line we can obtain a cover, 
the reasoning of the previous paragraph goes through. 

It remains to check the case when a point $P_k$ is asked and for all lines
$P_k \in \ell \notin L=\{\ell_1,\ell_2,\ldots,\ell_{k-1}\}$ there exists another
line $\ell'$ such that $L \cup \{\ell,\ell'\}$ is a cover. 
In this case we may assume that $P_k \notin \ell'$ for the following reasons. Suppose not
and $P_k$ is the unique intersection point of $\ell$ and $\ell'$. Then as
neither $L\cup \{\ell\}$ nor $L \cup \{\ell'\}$ is a cover, there must exist
points $Q_{1} \in \ell\setminus (\cup_{\ell'' \in L}\ell''\cup \{P_k\}), Q_{2} \in \ell'
\setminus (\cup_{\ell'' \in L}\ell''\cup \{P_k\})$. Now for any line $\ell'''\neq \ell, \ell'$ that contains
$P_k$ the line $\ell^*$ that extends $L \cup \{\ell'''\}$ to a cover must
contain $Q_1$ and $Q_2$ and thus $\ell^*=\langle Q_1,Q_2\rangle$ and clearly
$P_k \notin \langle Q_1,Q_2\rangle$.

It also follows that $\ell^*$ is essential in
the cover $L \cup \{\ell''',\ell^*\}$, thus if we answer NO to the query $P_k$
and provide the additional information that the 
unknown point lies in $\ell^*$, then the calculation for the restricted case gives us the desired lower bound. 
\hfill\mbox{}\qed
\end{proof}

Let us now turn to the non-adaptive case. The following lemma states that it is enough to consider separating systems consisting of only lines.

\begin{lemma}
\label{lem:line}
For any separating system $S$ of a projective plane $\pi$, there exists another one $S'$ that contains only lines and $|S|=|S'|$ holds.
\end{lemma}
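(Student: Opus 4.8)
The plan is to show that any separating system $S$ containing a subspace that is not a line (in the plane case, either a point or the whole plane $\cP$) can be modified into a separating system $S'$ of the same size in which that subspace is replaced by a line, without destroying the separating property; iterating over all non-line members of $S$ then yields the claim. First I would observe that the whole plane $\cP$ is useless as a query (every 1-space is contained in it, so it separates no pair), hence if $\cP \in S$ we may delete it and add an arbitrary line, keeping $|S|$ fixed and the separating property intact (in fact improving it). So the only real case is a member of $S$ that is a point $P$.

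Next I would analyze what replacing a point $P$ by a line does to the separation relation. The point $P$, viewed as a query, separates the pair $\{P, \uu\}$ for every $\uu \neq P$, and separates no other pair. So the pairs for which $P$ is the \emph{only} separator in $S$ all have the form $\{P,\uu\}$. Let $T = \{\,\uu \neq P : \text{no member of } S\setminus\{P\} \text{ separates } \{P,\uu\}\,\}$ be the set of points that $P$ is ``responsible'' for. If I replace $P$ by a line $\ell$, the new query $\ell$ separates $\{P,\uu\}$ precisely when exactly one of $P,\uu$ lies on $\ell$. Thus I need a line $\ell$ such that for every $\uu \in T$, exactly one of $P$ and $\uu$ lies on $\ell$; simultaneously I must not \emph{lose} separation of any pair that $S\setminus\{P\}$ already handled, but that is automatic since we only add a query. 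The cleanest choice is a line $\ell$ \emph{not} through $P$: then the condition becomes ``every $\uu \in T$ lies on $\ell$.'' This works only if $T$ is contained in a single line avoiding $P$. The alternative is a line $\ell$ \emph{through} $P$: then the condition becomes ``no $\uu \in T$ lies on $\ell$,'' i.e. $\ell$ through $P$ misses all of $T$.

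So the crux is: given that $S$ is separating, show $T$ is ``small enough'' that one of these two options is available. Here I would use separation more carefully. For any two distinct points $\uu_1,\uu_2 \in T$, neither is separated from $P$ by $S\setminus\{P\}$; I claim this forces a strong collinearity. Indeed, consider the line $m = \langle \uu_1,\uu_2\rangle$. I'd argue that $P \in m$ is impossible: if $P\notin m$, then $\uu_1,\uu_2,P$ are three points in general position and one can check (using that each query is a line, a point, or $\cP$, and noting a point-query $Q\ne P$ separates at most one of the pairs $\{P,\uu_i\}$ only if $Q\in\{\uu_1,\uu_2\}$) that consistency of ``$S\setminus\{P\}$ separates neither $\{P,\uu_1\}$ nor $\{P,\uu_2\}$'' with ``$S$ separates $\{\uu_1,\uu_2\}$'' pins down the structure. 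The upshot I expect is: either $T\cup\{P\}$ is collinear — say all on a line $n$ through $P$ — in which case I can pick a line $\ell$ through $P$ different from $n$, and then $\ell$ meets $T$ nowhere (everything in $T$ is on $n$, and $\ell\cap n=\{P\}\notin T$), giving a valid replacement through $P$; or $T$ lies on a line $n$ \emph{not} through $P$, and then $\ell = n$ is a valid replacement not through $P$. The main obstacle is verifying rigorously that no ``spread out'' $T$ can occur — that the separating hypothesis on $S$, together with the fact that the non-line queries being eliminated one at a time are only points, really does confine $T$ to a line; handling the bookkeeping as we remove several point-queries in succession (so that ``$S\setminus\{P\}$'' changes along the way, but by induction still consists of lines plus the remaining points) is the part that needs care.

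One final remark on the induction: I would order the point-queries of $S$ arbitrarily and process them one by one, at each step having already converted the earlier ones into lines, so that when I treat $P$ the current system is (lines) $\cup$ (points not yet processed), still separating; the argument above then applies verbatim with $S\setminus\{P\}$ replaced by this current system minus $P$. Since each step preserves separation and cardinality, after finitely many steps we reach the desired $S'$ consisting entirely of lines with $|S'|=|S|$.
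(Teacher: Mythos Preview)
Your overall plan---replace each point query by a line, one at a time, preserving the separating property and the cardinality---is exactly the paper's approach. The gap is in your analysis of the set $T$: you try to argue that $T$ is confined to a line, admit this is ``the main obstacle,'' and leave it unproved. In fact the situation is far simpler than you suspect: $|T|\le 1$. Here is the one-line argument you are missing. If $\uu_1,\uu_2\in T$ are distinct, then by definition of $T$ every $U\in S\setminus\{P\}$ satisfies $P\le U\iff \uu_1\le U$ and $P\le U\iff \uu_2\le U$; hence $\uu_1\le U\iff \uu_2\le U$, so $S\setminus\{P\}$ does not separate $\{\uu_1,\uu_2\}$. But the point-query $P$ does not separate $\{\uu_1,\uu_2\}$ either (neither equals $P$), so $S$ fails to separate $\{\uu_1,\uu_2\}$, contradicting the hypothesis. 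Thus $T$ is empty or a singleton $\{Q\}$, and any line through $P$ avoiding $Q$ (there are $q$ of them) is a valid replacement. All of your case analysis about collinearity of $T\cup\{P\}$, lines through or not through $P$, and ``spread out'' configurations is unnecessary.

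The paper streamlines slightly by first reducing to a \emph{minimal} separating system and then, among those, taking one with the fewest point-queries; minimality gives $|T|\ge 1$, the transitivity argument above gives $|T|\le 1$, and one replacement already contradicts the extremal choice. Your inductive bookkeeping (process the point-queries one by one) is a perfectly good alternative once you have $|T|\le 1$.
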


\begin{proof}
It is enough to prove the statement for minimal separating systems. 
Let $S$ be such a system such that it contains the minimum number of
points. If this number is 0, then we are done. Suppose $S$ contains a point
$P$. By minimality of $S$, we know that $S\setminus \{P\}$ is not
separating. Clearly, $P$ only separates pairs of points one of which is $P$
itself. There exists exactly one point $Q\neq P$ such that
$S\setminus \{P\}$ does not separate the pair $(P,Q)$. Indeed, by the above
there is at least 
one such point, furthermore if there was one more point $Q'$, then $Q$ and $Q'$
would not even be separated by $S$. Let $\ell$ be any line containing $P$ and
not containing $Q$. Then $S'=S\setminus \{P\} \cup \ell$ is 
a separating system such that $S'$ contains one point less than $S$. This contradicts the choice of $S$.
\hfill\mbox{}\qed
\end{proof}

Let us take a short graph theoretic detour. In a graph $G$ a subset $H_1
\subset V(G)$ of vertices \textit{resolves} another subset $H_2$ if the list
of path-distances in $G$ from the vertices in $H_1$ are unique in $H_2$,
i.e.\ for any $h_2,h_2' \in H_2$ there exists an $h_1 \in H_1$ such that
$d_G(h_1,h_2) \neq d_G(h_1,h_2')$ holds. A set $R$ of vertices is a
\textit{resolving set} in $G$ if it resolves $V(G)$.

For more information about resolving sets and related topics see \cite{BC}.

If $G$ is bipartite with
classes $A$ and $B$, then a subset $A'$ of $A$ ($B'$ of $B$) is
\textit{semi-resolving} if it resolves $B$ ($A$). Let $G_{\pi}$ be the
incidence graph of a projective plane $\pi$. Then by \lref{line} the minimum
size of a separating system in $\pi$ equals the minimum size of a
semi-resolving set in $G_{\pi}$. H\'eger and Tak\'ats \cite{HT} showed that
the minimum size of a resolving set in any projective plane of order $q\geq23$ is
$4q-4$ and obtained the following lower bound on the size of any
semi-resolving set in the incidence graph of $\PG(2,q)$. Note that $\tau_2(\pi)$
denotes the minimum size of a point set in $\pi$ that 
meets every line of $\pi$ in at least 2 points, that is $\tau_2(\pi)$ denotes the minimum size of a \textit{double (2-fold) blocking set} in $\pi$. 

\begin{theorem}[H\'eger, Tak\'ats \cite{HT}]
\label{thm:ht}
Let $S$ be a semi-resolving set in $\PG(2,q)$, $q \ge 3$. Then $|S| \ge \min\{2q +
q/4 - 3, \tau_2(\PG(2,q)) - 2\}$. 
\end{theorem}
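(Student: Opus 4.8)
The plan is to reduce, via \lref{line} and the self-duality of $\PG(2,q)$, to bounding below the size of a point set $B\subseteq\cP$ that \emph{separates lines}: for every two lines there is a point of $B$ on exactly one of them. The starting point is the elementary characterization of such $B$. Two distinct lines $m,m'$ meet in a unique point $x$, and they are \emph{not} separated exactly when $B\cap m=B\cap m'$; since this common intersection lies in $\{x\}$, it means that either both lines are disjoint from $B$, or $x$ is the unique point of $B$ on each of $m$ and $m'$. Hence $B$ separates the lines if and only if at most one line is disjoint from $B$ (an \emph{external} line) and every point of $B$ lies on at most one tangent of $B$ (a line meeting $B$ in exactly one point). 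A consequence I would record immediately: each tangent carries exactly one point of $B$ and no two tangents share that point, so $B$ has at most $|B|$ tangents.

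Write $t=|B|$ and assume for contradiction that $t<\min\{2q+q/4-3,\ \tau_2-2\}$. The strategy is to enlarge $B$ to a double blocking set by adjoining at most two points; since this double blocking set has size at most $t+2<\tau_2$, that contradicts the minimality of $\tau_2$. The only lines that $B$ does not already meet twice are its external line $e$ (if it exists) and its at most $t$ tangents. Adjoining two points $y_1,y_2$ repairs all of these provided that $y_1,y_2\in e$ (when $e$ exists) and every tangent passes through $y_1$ or $y_2$. Thus everything hinges on the \textbf{Claim}: if $t<2q+q/4-3$, then two points $y_1,y_2$ — both on $e$, if $e$ exists — can be chosen so that every tangent of $B$ passes through one of them. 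Granting the Claim, $B\cup\{y_1,y_2\}$ is a double blocking set of size $\le t+2$, and we are done.

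The Claim is the heart of the matter and is where the constant $q/4$ enters; the crude count is not enough. Summing $\sum_{\ell}|\ell\cap B|=t(q+1)$ over all lines and using that there is at most one external line, at most $t$ tangents, and that every remaining line contributes at least $2$, one gets only $t\ge 2q-1$. The extra $q/4$ must come from the failure of the Claim: if the tangents cannot be covered by two pencils, then — since two tangents through a common point of $B$ are forbidden, so any two tangents meet \emph{outside} $B$ — the tangents force many points off $B$ to lie on several tangents each, and the identity $\sum_{y\notin B}\binom{k_y}{2}=\binom{|\cT|}{2}$, where $\cT$ is the set of tangents and $k_y$ the number through $y$, pushes $|\cT|$, hence $t$, upward. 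Combined with the strong structural restrictions on a separating, nearly-doubly-blocking set of size just below $2q+q/4-3$ (roughly: $B$ must be organised around a full secant line, or be a small double blocking set with a few points deleted), a refined count that tracks lines meeting $B$ in exactly one or exactly two points should either confine the tangents to two pencils or yield $t\ge 2q+q/4-3$. Isolating the correct near-extremal configurations and extracting the exact constant is, I expect, the main obstacle.

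Finally, the external-line case is treated in parallel. Having at most one external line is itself a severe constraint, and once the structural normal form of $B$ is in hand one checks that the tangents of $B$ automatically meet $e$ in at most two points, so the two repair points can indeed be placed on $e$ at no cost. Combining the two cases gives $|B|\ge\min\{2q+q/4-3,\ \tau_2-2\}$, which is the claimed lower bound on any semi-resolving set of $\PG(2,q)$.
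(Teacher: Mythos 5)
The paper itself does not prove this statement at all: it is imported verbatim from H\'eger and Tak\'ats \cite{HT}, so your attempt has to stand on its own, and the natural benchmark is the proof in \cite{HT}. Your setup is fine and in fact coincides with the strategy used there: the characterization of a line-separating point set $B$ (at most one external line, and no point of $B$ lying on two tangents) is correct, and so is the repair framing --- if two points can be adjoined so that both lie on the external line $e$ (when it exists) and every tangent passes through one of them, then $B$ plus these two points is a double blocking set, whence $|B|\ge \tau_2(\PG(2,q))-2$.

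The genuine gap is that your ``Claim'' --- if $|B|<2q+q/4-3$ then the tangents of $B$ can be blocked by two points, which moreover can be taken on $e$ when $e$ exists --- is not an auxiliary step but the entire content of the theorem; it is exactly where the constant $q/4$ comes from. What you offer in its place is a heuristic: the pairwise-intersection identity $\sum_{y\notin B}\binom{k_y}{2}=\binom{|\cT|}{2}$ (which is valid, since two tangents indeed meet off $B$), an unspecified ``refined count'' of unisecants and bisecants, and an unjustified structural ``normal form'' of near-extremal $B$; you also assert without proof that in the external-line case the tangents ``automatically meet $e$ in at most two points'', which is again part of what must be shown, not a free consequence. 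No elementary double count of the kind you sketch is known to produce the $2q+q/4-3$ threshold; in \cite{HT} this step is carried out with algebraic machinery (R\'edei-polynomial techniques and the Sz\H{o}nyi--Weiner lemma applied to the set of tangent lines) together with a careful case analysis. As written, your proposal is a correct reduction plus an honest admission that the main lemma is missing, so it does not constitute a proof of the stated bound.
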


\tref{ht} together with the following theorem implies \tref{adnonad}.

\begin{theorem}[Ball, Blokhuis \cite{BB}]
\label{thm:bb} Let $q \ge 9$. Then $\tau_2(\PG(2,q)) \ge 2(q +\sqrt q + 1)$, and equality holds if and
only if $q$ is a square.
\end{theorem}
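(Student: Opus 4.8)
\emph{Construction for square $q$.} If $q$ is a square, take two disjoint Baer subplanes $\cB_1,\cB_2$ of $\PG(2,q)$ (these exist, e.g.\ from a partition of $\PG(2,q)$ into Baer subplanes obtained from a Singer cycle). A line meets a Baer subplane in $1$ or in $\sqrt q+1$ points, so each $\cB_i$ is a blocking set; hence a line meeting $\cB_1\cup\cB_2$ in at most one point would be tangent to one of them and disjoint from the other, which is absurd. So $\cB_1\cup\cB_2$ is a double blocking set of size $2(q+\sqrt q+1)$, proving the upper bound when $q$ is a square. The rest proves the lower bound for all $q\ge 9$ by the R\'edei polynomial method, and the equality characterisation drops out of the same analysis.

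\emph{Set-up.} Let $B$ be a minimal double blocking set, $|B|=2q+k$. A point off $B$ lies on $q+1$ lines, each carrying at least two points of $B$ and pairwise sharing only that point, so $k\ge 2$; and since we must show $k\ge 2\sqrt q+2$, we may assume $k\le 2\sqrt q+1$, hence $k\le q$ as $q\ge 9$. A double count over the lines (each meets $B$ at least twice; each pair of points of $B$ spans one line) gives at least $(q+1)(q-k)+3>0$ lines that are $2$-secants of $B$; fix one of them, $\ell_\infty$, as the line at infinity. Write $B\cap\ell_\infty=\{(\infty),(y_0)\}$ with $(\infty)$ the vertical point and $y_0\in\GF(q)$, and let $U=B\setminus\ell_\infty$, so $|U|=2q+k-2$. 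For every slope $m\in\GF(q)\setminus\{y_0\}$ the point $(m)$ lies off $B$ and a line of slope $m$ avoids both infinite points of $B$, hence meets $B$ in at least two affine points, i.e.\ in at least two points of $U$; and every line through $(y_0)$ meets $U$ at least once. With the R\'edei polynomial $R(X,Y)=\prod_{(a,b)\in U}(X-b+aY)$ this reads $(X^q-X)^2\mid R(X,m)$ for all $m\in\GF(q)\setminus\{y_0\}$ and $(X^q-X)\mid R(X,y_0)$.

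\emph{Lacunary analysis.} Since $R$ is monic of degree $|U|=2q+k-2$ in $X$, for each $m\ne y_0$ we get $R(X,m)=(X^q-X)^2h_m(X)$ with $h_m$ monic of degree $k-2$; expanding $(X^q-X)^2=X^{2q}-2X^{q+1}+X^2$ exhibits $R(X,m)$ as a \emph{lacunary} polynomial whose exponents lie in the blocks $[2,k]$, $[q+1,q+k-1]$, $[2q,2q+k-2]$, which are pairwise disjoint (as $k\le q$) and separated by gaps of length $q-k$. Thus $R(X,Y)$ is a single bivariate polynomial whose specialisation at each of the $q-1$ slopes $m\ne y_0$ is fully reducible over $\GF(q)$ with a gap of length about $q-k$ -- precisely the situation treated by the structure theory of fully reducible lacunary polynomials over $\GF(q)$ (R\'edei, Sz\H{o}nyi, Blokhuis, Blokhuis--Storme--Sz\H{o}nyi). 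Feeding in our data leaves two possibilities: either a common gap of such length is incompatible with $k\le 2\sqrt q+1$, which forces $k\ge 2\sqrt q+2$ and hence $|B|\ge 2(q+\sqrt q+1)$; or $U$ inherits a subfield structure, and the resulting borderline configuration is rigid enough to identify $B$ as a union of two disjoint Baer subplanes, which in turn forces $q$ to be a square. In either case the bound and the equality characterisation follow.

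The crux is this last step: elementary counting only gives the trivial $|B|\ge 2(q+1)$, and squeezing out the extra $2\sqrt q$ -- above all the rigidity pinning the extremal set down to two Baer subplanes -- requires the full force of the lacunary-polynomial machinery, controlling not just the degrees but the exact shape of the relevant factors and handling separately the exceptional behaviour at the slope $y_0$ and at the vertical direction.
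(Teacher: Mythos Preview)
The paper does not prove this theorem at all; it is quoted from Ball and Blokhuis \cite{BB} and used as a black box (together with \tref{ht}) to deduce \tref{adnonad}. There is therefore no proof in the paper to compare your proposal against.

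As for the sketch itself: the construction for square $q$ and the R\'edei-polynomial set-up are the right framework, and your outline is broadly faithful to the approach Ball and Blokhuis take. But what you label ``the crux'' and then dispatch in one sentence (``Feeding in our data leaves two possibilities\ldots'') is the entire content of their paper. Passing from the lacunary shape of $R(X,m)$ to $k\ge 2\sqrt q+2$, and pinning down equality to two disjoint Baer subplanes, is not a routine citation of pre-existing lacunary-polynomial lemmas; it required arguments tailored to the double-blocking situation. So as a \emph{proof} your sketch has a genuine gap precisely where you acknowledge the difficulty; as a \emph{road map} of where the work lies it is accurate, but it does not actually carry out that work.
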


On the other hand, Bailey \cite{Bailey} gave a semi-resolving set of size $\tau_2(\PG(2,q))-1$, and H\'eger and Tak\'ats \cite{HT} constructed one of size $2(q+\sqrt{q})$ in $\PG(2,q)$, $q$ a square prime power. 

\begin{corollary}
\label{cor:projprojproj}
Let $q \ge 121$ be a square. Then $M(3,q)=2q+2\sqrt q$ holds.
\end{corollary}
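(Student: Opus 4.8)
The plan is to obtain the lower bound $M(3,q) \ge 2q+2\sqrt q$ and the matching upper bound $M(3,q) \le 2q+2\sqrt q$ separately, combining the results already collected in this section. For the lower bound, recall that by \lref{line} the minimum size of a separating system in $\PG(2,q)$ equals the minimum size of a semi-resolving set in the incidence graph $G_{\PG(2,q)}$. Thus \tref{ht} applies and gives that any such system has size at least $\min\{2q+q/4-3,\ \tau_2(\PG(2,q))-2\}$. Now I would invoke \tref{bb}: since $q \ge 121$ is a square, $\tau_2(\PG(2,q)) = 2(q+\sqrt q +1)$ exactly, so $\tau_2(\PG(2,q))-2 = 2q+2\sqrt q$. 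It remains to check that for $q \ge 121$ the first term in the minimum is the larger one, i.e.\ $2q+q/4-3 \ge 2q+2\sqrt q$, equivalently $q/4 - 2\sqrt q - 3 \ge 0$; this holds for all $q \ge 121$ (indeed $\sqrt q \ge 11$ already suffices), so the minimum equals $2q+2\sqrt q$ and we conclude $M(3,q) \ge 2q+2\sqrt q$.

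For the upper bound, I would use the explicit construction of H\'eger and Tak\'ats mentioned just before the corollary: for $q$ a square prime power they exhibit a semi-resolving set in $G_{\PG(2,q)}$ of size $2(q+\sqrt q) = 2q+2\sqrt q$. Translating back via \lref{line} (a semi-resolving set that resolves the point class, after the reduction, corresponds to a separating system of lines of the same size), this yields a separating system in $\PG(2,q)$ of size $2q+2\sqrt q$, hence $M(3,q) \le 2q+2\sqrt q$. Since $q \ge 121$ is in particular a square prime power, the construction applies. Combining the two bounds gives $M(3,q) = 2q+2\sqrt q$.

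The only real point requiring care — and the step I would expect to be the mild obstacle — is the bookkeeping in the lower bound: verifying that the hypothesis $q \ge 121$ is exactly what makes $2q+q/4-3$ dominate $2q+2\sqrt q$ in the minimum of \tref{ht}, and confirming that the "$q$ is a square" hypothesis is precisely what turns the inequality of \tref{bb} into the equality $\tau_2 = 2(q+\sqrt q+1)$ that we need. Everything else is a direct citation of \lref{line}, \tref{ht}, \tref{bb}, and the Bailey / H\'eger--Tak\'ats constructions; no new geometric argument is needed.
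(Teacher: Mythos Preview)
Your proposal is correct and is exactly the argument the paper intends: the corollary is stated without proof precisely because it follows by combining \lref{line}, \tref{ht}, \tref{bb}, and the H\'eger--Tak\'ats construction in the way you describe, including the check that $q\ge 121$ is what forces $2q+q/4-3\ge 2q+2\sqrt q$ so that the minimum in \tref{ht} is attained at $\tau_2(\PG(2,q))-2$. One cosmetic remark: for the upper bound you do not really need to ``translate back via \lref{line}'' --- a semi-resolving set of lines \emph{is} already a separating system of lines, so the H\'eger--Tak\'ats construction gives $M(3,q)\le 2q+2\sqrt q$ directly; \lref{line} is only needed in the lower-bound direction.
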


Recall that $A(3,q)=2q-1$ by Theorem \ref{thm:adproj}. Thus Corollary \ref{cor:projprojproj} and Theorem \ref{thm:adproj} together prove Theorem \ref{thm:adnonad}, which we recall below. \setcounter{section}{1} \setcounter{theorem}{1}
\begin{theorem}
For $q\ge 9$ the inequality $A(3,q) <M(3,q)$ holds.
\end{theorem}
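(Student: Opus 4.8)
The plan is to combine the two quoted results with the exact value of $A(3,q)$ already established in \tref{adproj}. Since $A(3,q)=2q-1$ for every prime power $q$, it suffices to show that $M(3,q)\ge 2q$ whenever $q\ge 9$. By \lref{line} and the discussion following it, $M(3,q)$ equals the minimum size of a semi-resolving set in the incidence graph of $\PG(2,q)$, so \tref{ht} applies: any semi-resolving set $S$ in $\PG(2,q)$ satisfies $|S|\ge\min\{2q+q/4-3,\ \tau_2(\PG(2,q))-2\}$.

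Next I would bound each of the two quantities in that minimum from below by something strictly larger than $2q-1$. For the first term, $2q+q/4-3>2q-1$ is equivalent to $q/4>2$, i.e.\ $q>8$, which holds for all $q\ge 9$. For the second term, I invoke \tref{bb}: for $q\ge 9$ we have $\tau_2(\PG(2,q))\ge 2(q+\sqrt q+1)$, hence $\tau_2(\PG(2,q))-2\ge 2q+2\sqrt q>2q-1$, again using $q\ge 9$ (indeed $\sqrt q\ge 3>0$). Therefore $M(3,q)=|S|\ge\min\{2q+q/4-3,\ 2q+2\sqrt q\}\ge 2q>2q-1=A(3,q)$, which is the desired strict inequality.

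The only subtlety — and the step I would be most careful about — is the reduction $M(3,q)=$ (minimum size of a semi-resolving set), which rests on \lref{line} (every separating system can be replaced by one of the same size consisting only of lines) together with the observation that a line set is separating in $\pi$ precisely when the corresponding vertex set in one class of the incidence graph $G_\pi$ resolves the other class. This is routine but worth stating explicitly, since \tref{ht} is phrased in the language of semi-resolving sets. Everything else is a direct numerical comparison; there is no genuine obstacle once the three cited ingredients are lined up. One may note in passing that \cref{projprojproj} gives the sharper conclusion $M(3,q)=2q+2\sqrt q$ for square $q\ge 121$, but for \tref{adnonad} as stated the crude bound $M(3,q)\ge 2q$ already suffices.

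\begin{proof}
By \tref{adproj} we have $A(3,q)=2q-1$ for every prime power $q$, so it is enough to prove $M(3,q)\ge 2q$ for $q\ge 9$. By \lref{line}, a minimum separating system in $\PG(2,q)$ may be taken to consist only of lines; since a set of lines is separating in $\PG(2,q)$ if and only if the corresponding set of points of the dual incidence graph resolves the point class, $M(3,q)$ equals the minimum size of a semi-resolving set in $G_{\PG(2,q)}$. Let $S$ be such a set. By \tref{ht},
\[
|S|\ge\min\Bigl\{2q+\tfrac{q}{4}-3,\ \tau_2(\PG(2,q))-2\Bigr\}.
\]
For $q\ge 9$ we have $2q+\tfrac{q}{4}-3\ge 2q$, since $\tfrac{q}{4}\ge 3$. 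Moreover, by \tref{bb}, for $q\ge 9$ we have $\tau_2(\PG(2,q))\ge 2(q+\sqrt q+1)$, hence $\tau_2(\PG(2,q))-2\ge 2q+2\sqrt q\ge 2q$. Therefore $M(3,q)=|S|\ge 2q>2q-1=A(3,q)$, as claimed.
\hfill\mbox{}\qed
\end{proof}
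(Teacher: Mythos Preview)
Your approach is exactly the paper's: combine \tref{adproj}, \lref{line}, \tref{ht}, and \tref{bb}. The plan you wrote is entirely correct.

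There is, however, a small arithmetic slip in the formal proof. You assert that for $q\ge 9$ one has $q/4\ge 3$, hence $2q+q/4-3\ge 2q$. This fails for $q=9$ and $q=11$, where $q/4<3$. Your own plan had it right: what you actually need is $2q+q/4-3>2q-1$, i.e.\ $q/4>2$, i.e.\ $q>8$, which indeed holds for all prime powers $q\ge 9$. Alternatively, since $|S|$ is an integer, $|S|\ge 2q+q/4-3>2q-1$ already forces $|S|\ge 2q$. Either fix repairs the argument; the rest stands as written.
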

\setcounter{section}{2} \setcounter{theorem}{7}

The exact value of $\tau_2(\PG(2,q))$ is not known in general. If $q>3$ is a prime, then Ball proved $\tau_2(\PG(2,q))\geq 2.5(q+1)$ \cite{Ball}. As for large square values of $q$ we have $M(3,q)/q=2+2/\sqrt{q}$, while for prime values of $q$ we have $M(3,q)/q> 2.5$, we obtain the following. 

\begin{theorem}\label{thm:nolimit}
The sequence $M(3,q)/q$ does not have a limit.
\end{theorem}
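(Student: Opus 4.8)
The plan is to establish that the sequence $M(3,q)/q$ takes values that cannot converge to a single limit, by exhibiting two infinite subsequences with different limiting behaviour.

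First I would fix the lower-bound side along primes. For a prime $q > 3$, I would combine the result of Ball \cite{Ball} that $\tau_2(\PG(2,q)) \ge 2.5(q+1)$ with \tref{ht}: since a semi-resolving set $S$ in $\PG(2,q)$ satisfies $|S| \ge \min\{2q + q/4 - 3,\ \tau_2(\PG(2,q)) - 2\}$, for all sufficiently large primes $q$ the first term $2q + q/4 - 3 = 2.25q - 3$ is the smaller of the two (as $2.5(q+1) - 2 = 2.5q + 0.5$ exceeds it), so $|S| \ge 2.25q - 3$. By \lref{line} and the discussion identifying $M(3,q)$ with the minimum size of a semi-resolving set in $G_{\PG(2,q)}$, this gives $M(3,q) \ge 2.25q - 3$, hence $\liminf_{q \text{ prime}} M(3,q)/q \ge 2.25 > 2$. (One could even push this to $2.5$ using the $\tau_2$ branch more carefully, but $2.25$ already suffices.)

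Next I would handle the square prime powers. By \cref{projprojproj}, for every square $q \ge 121$ we have $M(3,q) = 2q + 2\sqrt q$, so $M(3,q)/q = 2 + 2/\sqrt q \to 2$ as $q \to \infty$ through squares. Since there are infinitely many square prime powers (e.g.\ $q = p^2$ for $p$ prime) and infinitely many primes, I would conclude that the sequence $M(3,q)/q$ has a subsequence tending to $2$ and another subsequence bounded below by $2.25$ (for large primes), so no limit can exist.

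The main obstacle — really the only nontrivial point — is checking that along primes the correct branch of the minimum in \tref{ht} is the $2q + q/4 - 3$ term rather than the $\tau_2 - 2$ term, which forces the lower bound to be genuinely above $2q$; this is where Ball's prime-specific bound $\tau_2(\PG(2,q)) \ge 2.5(q+1)$ is essential, since for squares $\tau_2$ is only about $2q + 2\sqrt q$ and would give nothing beyond $2$. Everything else is bookkeeping: assembling the already-proven \cref{projprojproj}, \tref{ht}, \lref{line}, and the cited bound of Ball, and observing that both index sets (large primes, large square prime powers) are infinite.
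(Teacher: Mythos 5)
Your proof is correct and follows essentially the same route as the paper: Ball's bound $\tau_2(\PG(2,q))\geq 2.5(q+1)$ for primes $q>3$, fed into \tref{ht} (via \lref{line} and the semi-resolving set identification) to keep $M(3,q)/q$ bounded away from $2$ along large primes, played against \cref{projprojproj}, which gives $M(3,q)/q=2+2/\sqrt{q}\to 2$ along large square $q$. You are in fact a bit more careful than the paper's own remark (which asserts $M(3,q)/q>2.5$ for primes), since the $2q+q/4-3$ branch of the minimum in \tref{ht} caps what it can yield at $2.25q-3$; for the same reason, though, your parenthetical suggestion that one could push the constant to $2.5$ using the $\tau_2$ branch of that theorem alone does not actually go through, but this aside is not used in your argument.
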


In case of $q=p^{2d+1}$, $p$ prime, $d\geq1$, Blokhuis, Storme and Sz\H{o}nyi \cite{BSSz} obtained the lower bound $\tau_2(\PG(2,q))\geq 2(q+1) + c_pq^{2/3}$, where $c_2=c_3=2^{-1/3}$ and $c_p=1$ otherwise. As for an upper bound if $q$ is not a square, Bacs\'o, H\'eger and Sz\H{o}nyi \cite{BHSz} showed $\tau_2(\PG(2,q))\leq 2q +2(q-1)/(r-1)$, where $q=r^d$, $r$ an odd prime power, $d$ odd. Thus for such parameters, \tref{ht} implies that if $r\geq11$, then $M(3,q)$ is either $\tau_2(\PG(2,q))-1$ or $\tau_2(\PG(2,q))-2$.

\section{General bounds}
\label{sec:general}
In this section we recall and prove \tref{adgen} and \tref{nonadgen}.

\setcounter{section}{1}\setcounter{theorem}{2}
\begin{theorem}
For any prime power $q\ge 2$ and positive integer $n$ the inequalities $\log {n \brack 1}_q \le A(n,q) \le (q-1)(n-1)+1$ hold.
\end{theorem}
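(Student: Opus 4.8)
The plan is to prove the two inequalities separately. For the lower bound $\log {n \brack 1}_q \le A(n,q)$, I would use the standard information-theoretic argument: the marked 1-space $\vv$ is one of ${n \brack 1}_q$ possibilities, and each query has only two possible answers (YES/NO), so a decision tree of depth $d$ can distinguish at most $2^d$ outcomes. Hence we need $2^d \ge {n \brack 1}_q$, i.e.\ $d \ge \log {n \brack 1}_q$. This can be phrased via the Adversary: while fewer than $\log {n \brack 1}_q$ queries have been asked, the set of 1-spaces consistent with the answers given so far always has size at least $2$, since each query at most halves the candidate set (the Adversary picks whichever answer keeps the larger part). I expect this direction to be routine.

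For the upper bound $A(n,q) \le (q-1)(n-1)+1$, I would describe an explicit adaptive strategy that peels off one dimension at a time. Fix a basis $e_1,\dots,e_n$ of $V$ and write $V = \langle e_1,\dots,e_{n-1}\rangle \oplus \langle e_n\rangle =: W \oplus \langle e_n\rangle$. The key observation is that a 1-space $\vv$ either lies in the hyperplane $W$ — this is one possibility we can test, or rather handle recursively — or else $\vv = \langle w + e_n \rangle$ for a \emph{unique} $w \in W$ (choosing the representative of $\vv$ with $e_n$-coordinate equal to $1$). So the $q^{n-1}$ one-spaces not contained in $W$ are parametrized by the vectors of $W$. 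For each nonzero $w\in W$ one has a line through $\langle e_n\rangle$ and $\langle w + e_n\rangle$, but to stay within the ``peel one coordinate'' framework I would instead proceed coordinate by coordinate: ask hyperplanes of the form $\{x : x_i = 0\}$-type queries to locate $\vv$ relative to the flag $\langle e_n \rangle \le \langle e_{n-1}, e_n\rangle \le \cdots \le V$. More precisely: ask the $q-1$ queries $U_c := \langle e_1, \dots, e_{n-1}, c e_{n-1}' \rangle$... — cleaner is the following. For the first block, ask whether $\vv \le \langle e_1,\dots,e_{n-1}\rangle = W$; this is a single query. If YES, recurse inside $W$ (an $(n-1)$-dimensional space). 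If NO, then $\vv = \langle w + e_n\rangle$ for a unique $w \in W$; now determine $w$ one coordinate at a time. For the first coordinate $w_1$ of $w$, the $q$ possible values are distinguished by $q-1$ queries: the subspaces $\langle e_n + c e_1, e_2, e_3, \dots, e_{n-1}\rangle$ for the $q-1$ nonzero... — I need to be careful that these are genuinely hyperplanes containing the right candidate set. The honest bookkeeping: at each of the $n-1$ stages we use at most $q-1$ queries to pin down one more coordinate of the representative (with the ``all remaining coordinates zero'' case needing no extra query, which is where the $-1$-type savings and the single extra ``$+1$'' come from), giving the total $(q-1)(n-1) + 1$.

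The main obstacle will be writing the upper-bound strategy so that every query is genuinely an allowed query (a subspace of $V$) and the accounting is tight — in particular making sure the YES/NO branches of the decision tree never exceed $(q-1)(n-1)+1$ queries, and that the ``$+1$'' is used exactly once (presumably for the initial hyperplane test, or for the final disambiguation at the bottom of the recursion where two candidates remain). I would handle this by an explicit induction on $n$: prove $A(n,q) \le (q-1)(n-1)+1$ assuming $A(n-1,q) \le (q-1)(n-2)+1$, with the base case $A(1,q) = 0$ (there is only one 1-space) or $A(2,q) \le q$ (a pencil of $q+1$ lines through a point, ask $q$ of them — actually $q-1$ suffice after the first YES/NO pattern, matching $(q-1)\cdot 1 + 1 = q$). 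The inductive step: one query to test $\vv \le W$; in the YES case invoke the hypothesis for $W$ (cost $\le 1 + (q-1)(n-2)+1$, but we can absorb the test into the recursion to avoid double-counting the ``$+1$''); in the NO case, $\vv$ meets the affine chart over $W$ and $q-1$ further queries per coordinate-determination round, reusing the structure. I expect the delicate point to be exactly this absorption of constants so the bound comes out as stated rather than off by one or two.
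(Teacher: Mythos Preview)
Your lower bound is exactly the paper's: the information-theoretic/halving argument.

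Your upper-bound strategy is correct and is essentially the paper's first (inductive) algorithm, just organized a little differently. The paper fixes an $(n-2)$-subspace $U$ and asks $q$ of the $q+1$ hyperplanes through $U$ in one block; the answers simultaneously reveal whether $\vv\le U$ (all YES) and, if not, which hyperplane $U_i\supset U$ contains $\vv$. Then it recurses inside $U$ (dimension $n-2$) or inside $U_i$ (dimension $n-1$), in the latter case saving one query because the answer to ``$\vv\le U$?'' is already known. Your ``test $W$, then determine $w$ coordinate by coordinate'' is the same thing unwound: your first query $W=\{x:x_n=0\}$ together with your $q-1$ queries $H_c=\{x:x_1=cx_n\}$ are precisely $q$ of the $q+1$ hyperplanes through $U=\langle e_2,\dots,e_{n-1}\rangle$.

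Your worry about absorbing the ``$+1$'' is unnecessary. In your YES branch the cost is $1+A(n-1,q)\le 1+(q-1)(n-2)+1=(q-1)(n-2)+2$, and this is $\le (q-1)(n-1)+1$ as soon as $q\ge 2$; so the YES branch is never the bottleneck and no trick is needed. The NO branch costs exactly $1+(n-1)(q-1)$, matching the claimed bound. The base case $A(2,q)\le q$ is as you say: ask $q$ of the $q+1$ points.

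The paper also gives a second proof of the upper bound via a two-round algorithm which you do not have: first ask the $n$ coordinate hyperplanes $A_i=\{v:v_i=0\}$ to learn the support $NZ$ of a representative of $\vv$; then, for the edges of any spanning tree on $NZ$, ask $q-2$ ``ratio'' hyperplanes per edge to pin down all coordinate ratios. The total is $n+(|NZ|-1)(q-2)\le (q-1)(n-1)+1$.
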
\setcounter{section}{3}\setcounter{theorem}{0}

\begin{proof} 
Let us begin with the lower bound as it follows from the trivial lower bound
that any separating system of subsets of $X$ should contain at least $\lceil \log |X| \rceil$ sets. Therefore any separating
system of subspaces of $V$ should contain at least $\lceil \log |{V \brack 1}|\rceil\ge (n-1)\log q$ subspaces. Note that if $q=2$, then the formula gives $\lceil \log 2^{n}-1\rceil=n$ as lower bound.

We will describe two algorithms to show the upper bound $A(n,q)\le (q-1)(n-1)+1$.
The first algorithm is a very simple inductive one and generalizes the algorithm that we had in the
projective plane case. First of all, note that if $n=2$, then the bound to prove is $q$ and just by asking $q$ out 
of the $q+1$ possible 1-subspaces we can determine the unknown 1-subspace $\uu$. Let us assume that for all $k<n$ 
we obtained an algorithm in the $k$ dimensional space that uses only $(k-1)$-subspaces as queries. 

Consider any $(n-2)$-subspace $U$ of $V$. There are exactly
$q+1$ $(n-1)$-subspaces $U_1,\ldots ,U_{q+1}$ of $V$ that contain $U$. Let us ask $q$ of them. After getting the answers 
to these queries, we will know whether $\uu \leqslant U$ or $\uu \subset U_i \setminus U$ holds for some $1 \le i \le q+1$ and in the latter case we even know the value of $i$. If $\uu \leqslant U$, then by induction we can finish our algorithm
in $(n-3)(q-1)+1$ queries that gives a total of $(n-2)(q-1)+2$ queries. If $\uu \subset U_i \setminus U$, then by our assumption that
an algorithm for the $(n-1)$ dimensional case uses only $(n-2)$-spaces, we can assume that the first query is $U$ and thus we need only
$(n-2)(q-1)$ more queries giving a total of $(n-1)(q-1)+1$ queries. Note that we can also satisfy the assumption that we only use $(n-1)$-subspaces, since, instead of querying an $(n-2)$-subspace $A$ of $U_i$, we can ask an $(n-1)$-subspace $A' \leqslant V$ such that $A'\cap U_i=A$. 

Note that even this easy algorithm does not utilize the whole power of adaptiveness as when decreasing the dimension by one, we can ask the $q$ queries at once. Thus the above algorithm uses at most $n-1$ rounds. In what follows, we introduce a two-round algorithm that uses the same number of queries to determine the unknown 1-subspace $\uu$.

Before describing the two-round algorithm note that to determine a 1-subspace $\uu$ it is enough to identify one
non-zero vector $u \in \uu$ as then $\uu=\{\lambda u: \lambda \in \GF(q)\}$. In
the next reasoning we will think of a vector $v \in V$ as an $n$-tuple of
elements of $\GF(q)$. For $i=1,2,\ldots ,n$ let us define the following
$(n-1)$-subspaces of $V$: $A_i=\{v=(v_1,v_2,\ldots ,v_n) \in V: v_i=0\}$. Let
$e_1,e_2,\ldots ,e_n$ denote the standard basis of $V$ and for $1 \le i<j\le n$
let us write $E_{i,j}=\langle e_i,e_j\rangle$. All $E_{i,j}$'s have dimension
2, therefore each of them contains $q+1$ 1-subspaces. Two of those are
$\{v=(v_1,v_2,\ldots ,v_n) \in E_{i,j}: v_i=0\}$ and $\{v=(v_1,v_2,\ldots ,v_n) \in
E_{i,j}: v_j=0\}$. For every pair $i,j$ let $\lul_{i,j,1},
\lul_{i,j,2},\ldots ,\lul_{i,j,q-1}$ be an arbitrary enumeration of the $q-1$ other
1-subspaces of $E_{i,j}$. Finally, for any $1\le i<j\le n$ and $1 \le k
\le q-1$ let us write $L_{i,j,k}=\{v=(v_1,v_2,\ldots ,v_n) \in V:
(0,\ldots ,0,v_i,0,\ldots ,0,v_j,0,\ldots ,0) \in \lul_{i,j,k}\}$. 
Clearly, all $L_{i,j,k}$'s are $(n-1)$-subspaces of $V$.

In the first round, our algorithm asks all subspaces $A_i$, $i=1,2,\ldots ,n$ as queries. Let $Z$ and $NZ$ denote
  the set of coordinates for which the answer was YES and NO,
  respectively. (Note that if $q=2$, then we are done as with the answers to the queries of the first round 
	we will be able to tell the one and only non-zero vector $u$ of $\uu$. This gives an algorithm of $n$ queries
	that matches the trivial lower bound mentioned earlier.)
	Let $T$ be any tree with vertex set $NZ$. Then in
  a second round of queries our algorithm asks the subspaces $L_{i,j,k}$ with
  $(i,j) \in E(T)$ and $1 \le k \le q-2$. We claim that after obtaining the
  answers to these queries, we are able to identify a vector $0 \neq
  u=(u_1,u_2,\ldots ,u_n) \in \uu$. Clearly, we have $u_i=0$ if and only $i \in
  Z$. As for any $i \in NZ$, we have $u_i\neq 0$, we obtain that for any pair
  $i,j \in NZ$ we have $(0,\ldots ,0,u_i,0,\ldots ,0,u_j,\ldots ,0) \in \lul_{i,j,k}$ for
  some $1 \le k \le q-1$. Thus by our queries of the second round, we will be
  able to tell to which such 1-subspace of $E_{i,j}$ the vector
  $(0,,,0,u_i,0,\ldots ,0,u_j,\ldots ,0)$ 
belongs.

Let us pick an arbitrary coordinate $x \in NZ$. We may assume that $u_x=1$ as
if not, then we can consider $u_x^{-1}u$ instead of $u$. Now for any $j \in
NZ$ with $(x,j) \in E(T)$ we can find out $u_j$ as there is exactly one vector
in $\lul_{x,j,k}$ with $x$-coordinate 1. As $T$ is connected and contains all
coordinates from $NZ$, we can determine all $u_i$'s 
with $i \in NZ$ one by one.
\hfill\mbox{}\qed
\end{proof}

One may obtain a bound in the non-adaptive case using a very similar strategy to that in the 2-round proof of \tref{adgen}. As this
time we have to ask all queries at a time, we have to make sure that no matter
what $NZ$ turns out to be we ask queries according to the edges of a connected
graph on $NZ$. To this end we do not have any other choice than to query for
all pairs $1 \le i<j\le n$. That is, we 
ask the separating system of the following subspaces:
\[
\{A_i: i=1,2,\ldots ,n\} \cup \{L_{i,j,k}: 1\le i<j \le n, 1\le k \le q-2\}.
\]

A proof identical to that in the adaptive case shows that this set of subspaces form a separating system.
This shows the bound $M(n,q) \le n+\binom{n}{2}(q-2)$. Thus we obtain that if $n$ is fixed, then $M(n,q)$ grows linearly in $q$. Our aim is not only to prove a similar statement for $n$, but to show that $M(n,q)$ grows linearly in $nq$. It is easy to see that the number of pairs of 1-subspaces separated by a subspace $U$ is maximized when $\dim(U)=n-1$. Thus a natural idea is to consider a set of randomly picked $(n-1)$-subspaces as candidate for a separating system of small size. This would yield the upper bound $M(n,q)=O(nq\log q)$. Another idea is to generalize what we used in the case of projective planes. If $n=3$, then the set of lines incident to at least one of 3 non-collinear points forms a separating system of size $3q-3$. Results of \sref{proj} show that this is not optimal, but is still of the right order of magnitude. Combining these two ideas, we obtain a proof of \tref{nonadgen}.\setcounter{section}{1}\setcounter{theorem}{3}

\begin{theorem}[upper bound]
$M(n,q) \le 2qn$.
\end{theorem}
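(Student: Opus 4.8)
The plan is to combine a cheap \emph{structured} family with a \emph{random} one, in exactly the spirit suggested above: the $n$ coordinate hyperplanes reduce the problem to separating pairs of $1$-subspaces \emph{of the same support}, and the pencils of hyperplanes through randomly chosen codimension-$2$ subspaces play the role that the ``lines through a point'' played in $\PG(2,q)$.

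First I would record the generalized-pencil observation. Fix a basis $e_1,\dots,e_n$ and put $A_i=\{v:v_i=0\}$; asking $A_1,\dots,A_n$ reveals the support of the marked $\vv$, so any two $1$-subspaces of different support are already separated. Normalising the first nonzero coordinate to $1$, there are $(q-1)^{|S|-1}$ $1$-subspaces of support exactly $S$, so the number of pairs still to be separated is at most $\sum_{|S|\ge 2}\binom{(q-1)^{|S|-1}}{2}\le \bigl(1+(q-1)^2\bigr)^{n}/\bigl(2(q-1)^2\bigr)$. Now let $U\le V$ be any codimension-$2$ subspace. Since $\dim(V/U)=2$ there are $q+1$ hyperplanes containing $U$, and if $W=\langle\uu,\vv\rangle$ meets $U$ trivially then $W\to V/U$ is an isomorphism, so $\bar\uu\ne\bar\vv$ are distinct points of the projective line $V/U$; picking a hyperplane of $V/U$ through $\bar\uu$ but not $\bar\vv$ and pulling it back yields a hyperplane through $U$ containing $\uu$ but not $\vv$. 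Moreover one may delete any single hyperplane of the pencil (for a pair it could have been needed for, the deleted hyperplane is one of the two that work, and the other is still present), so each codimension-$2$ subspace costs only $q$ queries and its pencil separates \emph{every} pair whose span is complementary to it.

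Next I would choose codimension-$2$ subspaces $U_1,\dots,U_m$ uniformly and independently and take the $n$ coordinate hyperplanes together with $q$ hyperplanes from each pencil, in all $n+mq$ queries. The number of codimension-$2$ subspaces complementary to a fixed $2$-subspace is exactly $q^{2(n-2)}$, so for a fixed pair with span $W$ one has $\Pr[U_r\cap W\ne 0]=1-q^{2(n-2)}/{n\brack 2}_q\le (q^2+q-1)/q^3<1/(q-1)$, whence the probability that none of the $m$ pencils separates the pair is at most $\bigl((q^2+q-1)/q^3\bigr)^m$. A union bound over the at most $\bigl(1+(q-1)^2\bigr)^{n}/\bigl(2(q-1)^2\bigr)$ equal-support pairs then shows that a suitable $U_1,\dots,U_m$ gives a separating system, and the point is to do this with $m$ small enough that $n+mq\le 2qn$. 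For the smallest values of $q$ the factor $(q^2+q-1)/q^3$ is too close to $1$ for this; there one instead adds a random family of \emph{hyperplanes}, for which $\Pr[H\text{ fails to separate a fixed pair}]=1-2(q-1)/q^2$, and together with the $A_i$ this separates all equal-support pairs once about $\tfrac{q^{2}}{2(q-1)}\ln\!\bigl(1+(q-1)^2\bigr)\,n$ of them are used, which is below $2qn$ when $q$ is small. Taking the better of the two constructions for each $q$ is meant to give $M(n,q)\le 2qn$.

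The hard part is exactly this last bookkeeping. The union bound is essentially tight once $n$ is large, so there is no slack to waste: one must use that a pencil costs $q$ rather than $q+1$ hyperplanes, that the coordinate hyperplanes shave a factor $(1+1/(q-1))^{-\Theta(n)}$ off the number of relevant pairs, and that plain random hyperplanes are already cheaper than $2qn$ for small $q$ — and then check, with a careful estimate of ${n\brack 2}_q$, that the resulting $n+mq$ does not exceed $2qn$ uniformly in $n$ and $q$. I expect that verification, rather than any single construction step, to be the main obstacle.
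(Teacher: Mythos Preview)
Your sketch is in the right spirit but works harder than needed, and the part you flag as the main obstacle is exactly what the paper's proof sidesteps. The paper uses \emph{only} random pencils --- no coordinate hyperplanes $A_i$ and no separate small-$q$ construction --- and the reason it can do so is a sharper estimate of the failure probability. You bound the probability that the pencil through a random codimension-$2$ subspace $U$ fails to separate $\uu,\vv$ by $\Pr[U\cap\langle\uu,\vv\rangle\ne 0]\le (q^2+q-1)/q^3$, using complementarity merely as a sufficient condition for separation. But the pencil \emph{also} separates whenever $U$ meets $\langle\uu,\vv\rangle$ exactly in $\uu$ or exactly in $\vv$; the only genuinely bad cases are $\uu,\vv\le U$, or $U\cap\langle\uu,\vv\rangle$ equal to one of the $q-1$ \emph{other} $1$-subspaces of $\langle\uu,\vv\rangle$. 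Counting these exactly (this is the paper's one-paragraph Claim) gives
\[
\Pr[\text{pencil through }U\text{ fails}]=\frac{(q-1){n-1\brack 2}-(q-2){n-2\brack 2}}{{n\brack 2}}\le\frac{(q-1)(q^{n-2}-1)}{q^n-1}\le\frac{1}{q},
\]
uniformly in $n$ and $q\ge 2$. With this clean $1/q$ in hand, a union bound over all $\binom{{n\brack 1}}{2}<q^{2n}/(2(q-1)^2)$ pairs of $1$-subspaces shows that $l=2n$ independent uniform $(n-2)$-subspaces already yield, with positive probability, a separating family of $2nq$ hyperplanes (taking $q$ of the $q+1$ per pencil, as you also noted). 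No support preprocessing, no case split, no second random model.

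The extra $\sim 1/q^2$ in your complementarity bound is precisely the slack that forces your detour: it pushes the base of your exponential just past what is needed to close the union bound with $m\le 2n-n/q$ pencils when $q$ is small, which is why you are driven to the auxiliary random-hyperplane family and the delicate final bookkeeping you anticipate. Your plan is not wrong and could likely be pushed through, but the missing observation --- analyse the actual failure event rather than the non-complementarity event --- removes the obstacle entirely and makes the proof a few lines long.
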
\setcounter{section}{3}\setcounter{theorem}{8}

\begin{proof} Let $V$ be an $n$-dimensional vector space over $\GF(q)$ and let
  $X_1,X_2, \ldots ,X_l$ be independent identically distributed random variables taking their values uniformly
  among all $(n-2)$-dimensional subspaces of $V$. For every $X_i$, there are
  exactly $q+1$ $(n-1)$-subspaces containing $X_i$, let us denote them by
  $X_{i,1},X_{i,2},\ldots ,X_{i,q+1}$. For any pair $\uu,\vv$ of 1-subspaces 
of $V$, let $S_{\uu,\vv}$ denote the indicator random variable of the event 
that $\uu$ and $\vv$ are not separated by
$X_{1,1},\ldots ,X_{1,q},X_{2,1},\ldots ,X_{2,q},\ldots ,X_{l,1},\ldots ,X_{l,q}$. 

\begin{claim}
\label{clm:count}
Let $\uu$ and $\vv$ be different 1-subspaces of $V$. Then the number of
$(n-2)$-subspaces $U$ of $V$ such that the family $\{U_1,U_2,\ldots ,U_{q+1}\}$ of all 
$(n-1)$-subspaces of $V$ containing $U$ does not separate $\uu$ and $\vv$ is
$(q-1){n-1 \brack n-3}-(q-2){n-2 \brack n-4}$.
\end{claim}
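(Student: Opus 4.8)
The plan is to pass to the quotient $V/U$, which is $2$-dimensional for every $(n-2)$-subspace $U$. The $q+1$ subspaces $U_1,\dots,U_{q+1}$ are exactly the preimages of the $q+1$ one-dimensional subspaces of $V/U$, and for a $1$-subspace $\mathbf{w}\le V$ one has $\mathbf{w}\le U_i$ precisely when the image $\bar{\mathbf{w}}$ of $\mathbf{w}$ in $V/U$ is either $0$ (that is, $\mathbf{w}\le U$) or equal to the $i$-th line. First I would use this dictionary to show that $\{U_1,\dots,U_{q+1}\}$ fails to separate $\uu$ and $\vv$ if and only if one of the following holds: (i) $\uu\le U$ and $\vv\le U$; or (ii) $\uu\not\le U$, $\vv\not\le U$ and $\bar\uu=\bar\vv$ in $V/U$. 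Indeed, in every other configuration some $U_i$ contains exactly one of $\uu,\vv$: if exactly one of them, say $\uu$, lies in $U$, then $\uu$ lies in every $U_i$ but $\vv$ lies in exactly one of them, so the remaining $U_i$'s separate the pair; and if neither lies in $U$ but $\bar\uu\neq\bar\vv$, then the $U_i$ corresponding to the line $\bar\uu$ contains $\uu$ but not $\vv$. I would state and check these few sub-cases carefully, as this is where all the content is.

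Next I would count the $U$'s of types (i) and (ii) using the standard fact that the number of $d$-subspaces of $V$ containing a fixed $e$-subspace $W$ equals the number of $(d-e)$-subspaces of $V/W$, namely ${n-e \brack d-e}$. Since $\uu\ne\vv$, the span $W:=\langle\uu,\vv\rangle$ is $2$-dimensional. Type (i) amounts to $W\le U$, and there are ${n-2 \brack n-4}$ such $(n-2)$-subspaces $U$. For type (ii), I would observe (using the dimension formula for $W+U$) that, given $\uu\not\le U$ and $\vv\not\le U$, the condition $\bar\uu=\bar\vv$ is equivalent to $W\cap U$ being a $1$-dimensional subspace $\mathbf{w}$ of $W$, and then $\uu\not\le U,\ \vv\not\le U$ force $\mathbf{w}\notin\{\uu,\vv\}$; there are $(q+1)-2=q-1$ such lines $\mathbf{w}$ in $W$. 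For a fixed such $\mathbf{w}$, the $(n-2)$-subspaces $U$ with $W\cap U=\mathbf{w}$ are exactly those containing $\mathbf{w}$ but not containing $W$ (if $\mathbf{w}\le U$, then $W\cap U$, being a subspace of the $2$-space $W$ that contains the line $\mathbf{w}$, is either $\mathbf{w}$ or $W$), so their number is ${n-1 \brack n-3}-{n-2 \brack n-4}$.

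Summing the two contributions gives
\[
{n-2 \brack n-4}+(q-1)\left({n-1 \brack n-3}-{n-2 \brack n-4}\right)=(q-1){n-1 \brack n-3}-(q-2){n-2 \brack n-4},
\]
which is the claimed count. The main obstacle is not any single computation but the discipline of the case analysis in the first step: one must make sure every position of the pair $(\uu,\vv)$ relative to $U$ is correctly sorted into "separated" or "not separated", and that the translation into the quotient is set up so that the case $\mathbf{w}\le U$ is handled cleanly. The Gaussian-binomial bookkeeping afterwards is routine, with the usual convention ${m \brack k}=0$ for $k<0$ or $k>m$ taking care of degenerate small values of $n$.
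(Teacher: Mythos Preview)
Your proof is correct and follows essentially the same route as the paper: the same three-way case split on how many of $\uu,\vv$ lie in $U$, the same characterization of the ``neither in $U$'' case as $U$ meeting $\langle\uu,\vv\rangle$ in one of the $q-1$ lines other than $\uu,\vv$, and the same Gaussian-binomial count. The only difference is cosmetic---you phrase the non-separation condition via the quotient $V/U$ and $\bar\uu=\bar\vv$, whereas the paper states it as $\dim(U+\langle\uu,\vv\rangle)=n-1$---but these are the same condition.
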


\textit{Proof of Claim.}
 Clearly, if $\uu,\vv \in U$, then $U_1,U_2,\ldots ,U_{q+1}$ do not
separate $\uu$ and $\vv$, while if exactly one of them lies in $U$, then 
 $U_1,U_2,\ldots ,U_{q+1}$ do separate them. If $\uu,\vv \notin U$, then $\uu$ and
 $\vv$ are not separated by $U_1,U_2,\ldots ,U_{q+1}$ if and only if
 $\dim(U,\langle \uu,\vv\rangle)=n-1$, that is if $U$ meets $\langle \uu,\vv
 \rangle$ in a 1-subspace different from both $\uu$ and $\vv$. As there are
 $q-1$ such 1-subspaces, the number of such $U$'s is $(q-1)\left({n-1 \brack
   n-3}-{n-2 \brack n-4}\right)$.
Thus the number of $(n-2)$-subspaces satisfying the condition of the claim is
${n-2 \brack n-4}+ (q-1)\left({n-1 \brack n-3}-{n-2 \brack n-4}\right)$ as claimed.
\hfill\mbox{}$\blacksquare$
\smallskip

\noindent By the above claim we obtain the expected value of $S_{\uu,\vv}$ satisfies
\[
\mathbb{E}(S_{\uu,\vv})=\left(\frac{(q-1){n-1 \brack 2}-(q-2){n-2 \brack 2}}{{n \brack 2}}\right)^l\le \left(\frac{(q-1)(q^{n-2}-1)}{q^n-1}\right)^l\le \frac{1}{q^l}.
\]
And thus if we set $l=2n$, then we have
\[
\mathbb{E}(\sum_{\uu,\vv}S_{\uu,\vv})
\le \binom{{n \brack 1}}{2}\frac{1}{q^l}\le 1/2.
\]
Therefore, there exists a collection of $2n$ $(n-2)$-dimensional subspaces such that the set of $(n-1)$-dimensional subspaces containing any of them is a separating family. Clearly, to separate pairs of 1-subspaces, it is enough to query $q$ of the $q+1$ $(n-1)$-subspaces containing a fixed $(n-2)$-subspace, and thus we have $M(n,q) \le 2nq$.
\hfill\mbox{}\qed
\end{proof}

To obtain the lower bound in \tref{nonadgen} we will use the following theorem of Katona \cite{K} about separating systems of sub\textit{sets} of an underlying set.

\begin{theorem} [Katona \cite{K}]
\label{thm:katona} Let $X$ be an $M$-element set and $\cA \subseteq 2^X$ be a separating system of subsets of $X$ such that for all $A \in\cA$ we have $|A| \le m$ for some integer $m<M/2$. Then the following inequality holds
\[
|\cA| \ge \frac{\log M}{\log e\frac{M}{m}}\frac{M}{m}.
\]
\end{theorem}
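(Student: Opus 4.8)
The plan is to encode each element of $X$ by its pattern of membership in the sets of $\cA$ and then apply subadditivity of Shannon entropy. Write $\cA=\{A_1,\dots,A_s\}$ with $s=|\cA|$ the quantity to be bounded from below, let $\mathbf{x}$ be a uniformly random element of $X$ (so $H(\mathbf{x})=\log M$, where all logarithms are base $2$ as in the paper and $H$ is Shannon entropy), and consider the \emph{signature} $\sigma(\mathbf{x})=(\mathbf{1}[\mathbf{x}\in A_1],\dots,\mathbf{1}[\mathbf{x}\in A_s])\in\{0,1\}^{s}$. Since $\cA$ is separating, distinct elements of $X$ have distinct signatures, so $\mathbf{x}$ is a function of $\sigma(\mathbf{x})$ and therefore $H(\sigma(\mathbf{x}))\ge H(\mathbf{x})=\log M$.

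On the other hand, subadditivity of entropy gives
\[
\log M\ \le\ H(\sigma(\mathbf{x}))\ \le\ \sum_{i=1}^{s} H(\mathbf{1}[\mathbf{x}\in A_i])\ =\ \sum_{i=1}^{s} h(|A_i|/M),
\]
where $h(p)=-p\log p-(1-p)\log(1-p)$ is the binary entropy function. Because $|A_i|\le m<M/2$ for every $i$ and $h$ is increasing on $[0,1/2]$, each summand is at most $h(m/M)$, so $\log M\le s\,h(m/M)$. It remains to use the elementary bound $h(p)\le p\log(e/p)$ valid for $0<p\le 1/2$: indeed the map $p\mapsto p+(1-p)\ln(1-p)$ vanishes at $0$ and has derivative $-\ln(1-p)\ge 0$, hence is nonnegative, which rearranges to $-(1-p)\log(1-p)\le p\log e$ and thus $h(p)\le p\log(1/p)+p\log e=p\log(e/p)$. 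Setting $p=m/M$ turns $\log M\le s\,h(m/M)$ into $\log M\le s\cdot\frac{m}{M}\log\frac{eM}{m}$, i.e.
\[
|\cA|=s\ \ge\ \frac{M}{m}\cdot\frac{\log M}{\log(eM/m)},
\]
which is the asserted inequality.

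Once entropy is brought in, the argument is essentially immediate; the one step that must be done precisely is the last estimate $h(m/M)\le \frac{m}{M}\log\frac{eM}{m}$, since it is exactly what produces the denominator $\log(eM/m)$ in the final bound, and the hypothesis $m<M/2$ is used to guarantee that $m/M$ lies in the increasing range $[0,1/2]$ of $h$. A more hands-on route avoiding entropy is also available — at most $\sum_{j<t}\binom{s}{j}\le(es/t)^{t}$ signatures have weight below $t$, hence at least $M-ms/t$ of them have weight at least $t$, so that $ms\ge t(M-ms/t)$ and an optimisation over $t$ yields a bound of the same shape — but pinning down the constant in the precise form stated is cleanest through the entropy computation above.
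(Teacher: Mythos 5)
Your entropy proof is correct and complete: injectivity of the signature map (which is exactly the separating property), subadditivity of entropy, monotonicity of the binary entropy $h$ on $[0,1/2]$ (the only place the hypothesis $m<M/2$ is used), and the elementary estimate $h(p)\le p\log(e/p)$ — which you verify correctly via the derivative computation — combine to give $|\cA|\ge\frac{M}{m}\cdot\frac{\log M}{\log(eM/m)}$, matching the statement once $\log e\frac{M}{m}$ is read as $\log\bigl(e\frac{M}{m}\bigr)$. Note, however, that there is no argument in the paper to compare yours with: Theorem~\ref{thm:katona} is quoted from Katona \cite{K} and used as a black box in the proof of the lower bound in Theorem~\ref{thm:nonadgen}, so the paper contains no proof of it. Katona's original derivation is a counting/convexity argument close in spirit to the alternative you sketch in your last sentence, and your entropy formulation is the standard modern packaging of that idea, so it is a perfectly acceptable substitute. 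One small caution about that final aside: as written it is garbled — by Markov's inequality at most $ms/t$ elements have signature weight at least $t$, so at least $M-ms/t$ elements have weight below $t$ and must receive distinct signatures among the at most $(es/t)^{t}$ low-weight patterns, giving $M-ms/t\le(es/t)^{t}$ to be optimized over $t$; your displayed inequality $ms\ge t(M-ms/t)$ is not the right combination. Since the aside is not part of your actual proof, this does not affect its correctness.
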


\setcounter{section}{1}\setcounter{theorem}{3}
\begin{theorem}[lower bound]
There exists an absolute constant $C>0$ such that for any positive integer $n$ and prime power $q$ the inequality $\frac{1}{C}qn \le M(n,q)$ holds. Moreover, if $q$ tends to infinity, then $(1-o(1))qn\le M(n,q)$ holds.
\end{theorem}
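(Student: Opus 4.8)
The plan is to reduce the $q$-analogue problem to the classical subset separating-system setting and then invoke Katona's theorem (\tref{katona}). The underlying set will be $X={V\brack 1}$, so $M=|X|={n\brack 1}_q=\frac{q^n-1}{q-1}$, which is of order $q^{n-1}$. A separating system of subspaces of $V$ gives, via the correspondence described in the Introduction, a separating system of subsets of $X$; the subset corresponding to a subspace $U\le V$ has size $|{U\brack 1}|={\dim U\brack 1}_q$. The key point is that this size is maximized when $\dim U=n-1$, in which case it equals ${n-1\brack 1}_q=\frac{q^{n-1}-1}{q-1}$, which is roughly $M/q$. So we may take $m={n-1\brack 1}_q$ in Katona's theorem, noting that $m<M/2$ holds as soon as $q\ge 3$ (and the case $q=2$ is already settled by $M(n,2)=n$).

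With these parameters, $M/m=\frac{q^n-1}{q^{n-1}-1}$, which lies strictly between $q-1$ and $q+1$ — more precisely it tends to $q$ as $n\to\infty$ and is always within a bounded factor of $q$. Plugging into Katona's bound,
\[
M(n,q)\ \ge\ \frac{\log M}{\log\!\bigl(e\,\tfrac{M}{m}\bigr)}\cdot\frac{M}{m}
\ \ge\ \frac{(n-1)\log q - O(1)}{\log q + O(1)}\cdot (q-1),
\]
and the first factor is $(1-o(1))n$ as long as $n$ grows, while it is bounded below by an absolute positive constant times $n$ for all $n\ge 1$ and all $q$. This yields the absolute-constant bound $\frac1C qn\le M(n,q)$. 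For the asymptotic refinement, I would let $q\to\infty$: then $\frac{M}{m}=q(1+o(1))$, $\log M=(n-1)\log q(1+o(1))$ and $\log(eM/m)=\log q(1+o(1))$, so the first fraction is $(n-1)(1+o(1))=n(1+o(1))$ (for $n$ not too small relative to the error terms; for bounded $n$ one checks directly, or simply absorbs it since the claim $(1-o(1))qn\le M(n,q)$ with $n$ fixed also follows because $M(n,q)\ge M(2,q)=q$ handles $n=1,2$ and the linear growth in $q$ is what matters). Hence $M(n,q)\ge(1-o(1))nq$.

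The one genuine thing to verify carefully is the claim that among all subspaces $U$ of $V$, the associated subset ${U\brack 1}$ of ${V\brack 1}$ has largest cardinality precisely when $\dim U=n-1$; this is immediate since ${k\brack 1}_q=\frac{q^k-1}{q-1}$ is strictly increasing in $k$, and a subspace of dimension $n$ is all of $V$ (useless in a separating system), so the bound $m={n-1\brack 1}_q$ is legitimate. The other minor point is bookkeeping with the $O(1)$ error terms to confirm $m<M/2$ for $q\ge 3$ and to extract a clean absolute constant $C$; these are elementary estimates on the Gaussian binomial coefficients, $q^{k-1}\le{k\brack 1}_q\le\frac{q^k}{q-1}\le 2q^{k-1}$. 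I do not expect any real obstacle — the content is entirely in recognizing that $M(n,q)$ is a restricted-query instance of classical search and that the query-size bound $m\approx M/q$ is exactly what makes Katona's estimate give the matching lower bound $\Theta(nq)$.

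\begin{proof}
Via the correspondence described in the Introduction, $M(n,q)$ is the minimum size of a separating system of subsets of $X={V\brack 1}$, where the allowed subsets are those of the form ${U\brack 1}$ for $U\le V$. Let $M=|X|={n\brack 1}_q=\tfrac{q^n-1}{q-1}$. For any proper subspace $U$ of $V$ we have $\dim U\le n-1$, hence $|{U\brack 1}|={\dim U\brack 1}_q\le{n-1\brack 1}_q=:m$, the inequality holding because ${k\brack 1}_q=\tfrac{q^k-1}{q-1}$ is strictly increasing in $k$; and $U=V$ contributes the full set, so is never used in a minimal separating system. Using $q^{k-1}\le{k\brack 1}_q\le 2q^{k-1}$ one checks $m<M/2$ for $q\ge 3$ (the case $q=2$ being covered by $M(n,2)=n$). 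Thus \tref{katona} applies with this $m$, giving
\[
M(n,q)\ \ge\ \frac{\log M}{\log\!\left(e\frac{M}{m}\right)}\cdot\frac{M}{m}.
\]
Now $\frac{M}{m}=\frac{q^n-1}{q^{n-1}-1}$, which satisfies $q-1<\frac{M}{m}\le q+1$, and $\log M\ge (n-1)\log q$, while $\log\!\left(e\frac{M}{m}\right)\le \log q + 2$. Therefore
\[
M(n,q)\ \ge\ \frac{(n-1)\log q}{\log q+2}\,(q-1).
\]
Since $\frac{(n-1)\log q}{\log q+2}\ge c_0 n$ for an absolute constant $c_0>0$ (valid for all $n\ge 1$ and all prime powers $q$, using also $M(n,q)\ge M(2,q)=q$ to absorb small cases), we obtain $M(n,q)\ge \frac1C qn$ for a suitable absolute constant $C$.

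For the asymptotic statement, fix the growth $q\to\infty$. Then $\frac{M}{m}=q(1+o(1))$, $\log\!\left(e\frac{M}{m}\right)=\log q+o(\log q)$, and $\log M\ge (n-1)\log q$, whence
\[
M(n,q)\ \ge\ \frac{(n-1)\log q}{\log q + o(\log q)}\cdot q(1+o(1))\ =\ (1-o(1))\,nq,
\]
where for bounded $n$ the estimate $M(n,q)\ge M(2,q)=q=(1-o(1))\cdot \tfrac{nq}{n}$ together with linearity in $q$ gives the same conclusion. This proves $(1-o(1))qn\le M(n,q)$.
\hfill\mbox{}\qed
\end{proof}
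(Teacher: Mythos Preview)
Your proof is correct and follows essentially the same approach as the paper: reduce to the subset setting, observe that every nontrivial query has size at most $m={n-1\brack 1}_q$, and apply Katona's \tref{katona} with $M={n\brack 1}_q$. The paper's proof is terser (it does not explicitly verify $m<M/2$, justify the choice of $m$, or treat $q=2$ separately) but arrives at the same displayed inequality $M(n,q)\ge(n-1)q\cdot\frac{\log q}{2+\log\frac{q^n-1}{q^{n-1}-1}}$; your added bookkeeping is sound and the arguments are otherwise identical.
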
\setcounter{section}{3}\setcounter{theorem}{8}
\begin{proof}
\tref{katona} can be applied to obtain the desired bound. Indeed, as mentioned in the Introduction, if $X$ is the set of all 1-subspaces of $V$ and the set $Q$ of all allowed queries is
\[
\left\{F \subset {V \brack 1}: \exists U \le V \textnormal{ with}\ F=\left\{\uu \in {V \brack 1}: \uu \le U\right\}\right\},
\]
then we can write $M={n \brack 1}=\frac{q^n-1}{q-1}$ and $m={n-1 \brack 1}=\frac{q^{n-1}-1}{q-1}$ since the largest ``meaningful'' query sets are those corresponding to $(n-1)$-subspaces of $V$. Substituting these values to the formula of \tref{katona} we obtain
\[
M(n,q) \ge \frac{\log M}{\log e\frac{M}{m}}\frac{M}{m}=\frac{\log\frac{q^n-1}{q-1}}{\log e\frac{q^n-1}{q^{n-1}-1}}\frac{q^n-1}{q^{n-1}-1}\ge (n-1)q\frac{\log q}{2+\log \frac{q^n-1}{q^{n-1}-1}}.
\] \hfill \mbox{ } \qed
\end{proof}


\section{Remarks}
We may formulate the dual searching problem: a hyperplane $H_0$ of $\PG(n-1,q)$ is marked, and we
can ask whether a subplane $H$ is contained in $H_0$; how many queries do we
need to identify $H_0$? Let us consider now the non-adaptive case in $\PG(n,q)$.
Suppose that we only ask points as queries. Thus we are to find a point set $S$ 
such that its intersection with any hyperplane is unique. Clearly, if the intersection of 
$S$ and any hyperplane contains $n$ points in general position, we are done.
Note that, however, this condition implies that any hyperplane is generated by its intersection
with $S$, which is clearly stronger than our original goal. Such a point set may be called a 
\emph{hyperplane generating set}. Let us denote the size of the smallest hyperplane generating
set by $\sigma(\PG(n,q))$, and denote the size of the smallest $n$-fold blocking set with respect to hyperplanes 
by $\tau_n^{n-1}(\PG(n,q))$. In case of $n=3$, that is projective planes, a hyperplane 
(line) generating set is just a double blocking set, thus $\sigma(\PG(2,q))=\tau_2^1(\PG(2,q))=\tau_2(\PG(2,q))$; 
furthermore, as seen in the remark after \tref{nolimit}, $M(3,q)$ is usually a bit smaller than $\tau_2(\PG(2,q))$.

In higher dimensions an $n$-fold blocking set with respect to hyperplanes is not necessarily a 
hyperplane generating set. Trivially, $\tau_n^{n-1}(\PG(n,q))\leq\sigma(\PG(n,q))$ and 
$M(n+1,q)\leq\sigma(\PG(n,q))$, but it is not clear how far these parameters are from each other if
$n\geq3$.

As any line intersects every hyperplane in at least one point, the union of $n$ pairwise nonintersecting 
lines is an $n$-fold blocking set with respect to hyperplanes.
We may also try to find a hyperplane generating set as the union of some lines. Let us say that
a set of lines is in \emph{higgledy-piggledy position} if their union is a hyperplane generating set.


Thus if we could find a set of $h(n,q)$ lines in higgledy-piggledy position in $\PG(n,q)$, then $M(n+1,q)\leq h(n,q)q$ would follow. 
For $n=2$, any three non-concurrent lines suffice; for $n=3$, one may take three lines of the same regulus of a hyperbolic
quadric and a fourth line disjoint from the quadric; for $n=4$, five lines turn out not to be enough \cite{FSzSz}. 
For $n\geq4$, we could not construct a small set of lines in higgledy-piggledy position so far.
The arising finite geometrical questions seem quite interesting \cite{FSz}.

\section{Acknowledgments}
We thank the anonymous referees for their helpful suggestions. Tam\'as H\'eger and Marcella Tak\'ats were supported by Hungarian National Scientific Fund (OTKA) Grant No.\ K~81310. Tam\'as H\'eger was also supported partially by ERC Grant No.\ 227701 DISCRETECONT. 
Bal\'azs Patk\'os was supported by OTKA Grant PD-83586 and the J\'anos Bolyai Research Scholarship of the Hungarian Academy of Sciences.

\end{document}